\newtheorem{theorem}{Theorem}[section]
\newtheorem{lemma}[theorem]{Lemma}
\theoremstyle{definition}
\newtheorem{corollary}[theorem]{Corollary}
\newtheorem{proposition}[theorem]{Proposition}
\theoremstyle{remark}
\newtheorem{remark}[theorem]{Remark}
\numberwithin{equation}{section}
\newcommand{\Z}{\mathbb{Z}}
\newcommand{\N}{\mathbb{N}}
\newcommand{\R}{\mathbb{R}}
\newcommand{\eps}{\varepsilon}
\newcommand{\ov}{\overline}
\newcommand{\SR}{{\mathcal R}}
\newcommand{\SU}{{\mathcal U}}
\newcommand{\wt}{\widetilde}
\newcommand{\dm}{\diff^{1}_{m}(M)}
\newcommand{\dw}{\diff^{1}_{\omega}(M)}
\newcommand{\gm}{{\mathcal F}^1_m(M)}
\newcommand{\gw}{{\mathcal F}^{1}_{\omega}(M)}
\newcommand{\vf}{\mathfrak{X}^1(M)}
\newcommand{\vm}{\mathfrak{X}^1_m(M)}
\newcommand{\sm}{\mathfrak{X}^*_m(M)}
\newcommand{\diff}{\operatorname{Diff}}
\begin{document}

\title{Hyperbolicity in the Volume Preserving Scenario}

\author{Alexander Arbieto}
\address{Instituto de Matem\'atica,
Universidade Federal do Rio de Janeiro,
P. O. Box 68530,
21945-970 Rio de Janeiro, Brazil.}
\email{arbieto@im.ufrj.br}
\thanks{A.A. was partially supported by CNPq Grant and Faperj.}

\author{Thiago Catalan}
\address{Instituto de Ci\^encias, Matem\'atica e Computa\c{c}\~ao,
Universidade de São Paulo,
16-33739153 S\~ao Carlos-SP, Brazil}
\email{catalan@icmc.usp.br}
\thanks{T.C. was partially supported by Fapesp.}

\subjclass[2010]{Primary 37D20; Secondary 37C20}

\date{\today}

\keywords{Hyperbolic orbits, Volume-preserving, Palis conjecture.}

\begin{abstract}
Hayashi has extended a result of Ma\~n\'e, proving  that every diffeomorphism $f$ which has a $C^1$-neighborhood $\mathcal{U}$, where all periodic points of any $g\in\mathcal{U}$ are hyperbolic, it is an Axiom A diffeomorphism. Here, we prove the analogous result in the volume preserving  scenario.
\end{abstract}

\maketitle

\section{Introduction and Statement of the Results}

Let $M$ be a $C^{\infty}$ $d$-dimensional  Riemannian manifold without boundary and let
$\dm$ denote the set of diffeomorphisms which preserves the Lebesgue measure $m$ induced by the Riemannian metric. We endow this space with the $C^1$-topology.

In the theory of dynamical systems, one important question is to know whether robust dynamical properties in the phase space leads to differentiable properties of the system. For instance, one of the most important properties that a system can have is stability. This says that any system close enough to the initial have the same orbit structure of the initial. In other terms, this says that there is a topological conjugacy between this system and the initial one.

In a striking article \cite{M3} Ma\~n\'e proves that any $C^1$ structurally stable diffeomorphism is an Axiom A diffeomorphism. In \cite{P2} Palis extended this result to $\Omega$-stable diffeomorphisms. Actually, Ma\~n\'e believe that a weaker property than $\Omega$-stability should be enough to guarantee the Axiom A property. Let us elaborate on this property.

Given a diffeomorphism $f$ over $M$, a periodic point $p$ of $f$ is {\it hyperbolic} if $Df^{\tau(p)}$ has eigenvalues with absolute values different of one, where $\tau(p)$ is the period of $p$. In the space of $C^1$ diffeomorphisms over $M$, $\diff^1(M)$, we can define the set $\mathcal{F}^1(M)$ as the set of diffeomorphisms $f\in \diff^1(M)$ which have a $C^1$-neighborhood $\mathcal{U}\subset \diff^1(M)$ such that if $g\in \SU$ then any periodic point of $g$ is hyperbolic. In \cite{Hayashi}, Hayashi proved that any diffeomorphism in $\mathcal{F}^1(M)$ is Axiom A, which means that the periodic points are dense in the nonwandering set $\Omega(f)$ and the last one is a hyperbolic set. We recall that in dimension two, this was proved by Man\'e \cite{Mane}. This was also studied for flows without singularities by Gan and Wen in \cite{GW}.

Observe that in the volume preserving scenario, the Axiom A condition is equivalent to the diffeomorphism be Anosov, since  $\Omega(f)=M$ by Poincar\'e Recurrence Theorem. Hence, it is a natural question if Hayashi's and Man\'e's results still holds in the volume preserving scenario. Actually, it seems that the arguments of Ma\~n\'e holds in this case, specially the ones related to the perturbations. Moreover, using recent generic results in the volume preserving context much of the arguments of the original proof can be avoided. The purpose of this article is to do this. We want to stress out that we follow the same lines of Ma\~n\'e's original argument once we show that periodic points must have the same index. We observe also that Bessa and Rocha also have analogous results in \cite{BR} and together with Ferreira in \cite{BFR}, for the context of incompressible and Hamiltonian flows, but in lower dimensions (three and four respectively). We will also discuss what kind of results our arguments could prove in the contex of incompressible flows in any dimension.

We define the set $\gm$ as the set of diffeomorphisms $f\in \dm$ which have a $C^1$-neighborhood $\mathcal{U}\subset \dm$ such that if $g\in \SU$ then any periodic point of $g$ is hyperbolic.

If $f\in \diff^1(M)$ then an $f-$invariant compact set $\Lambda$ of $M$ is called a {\it hyperbolic set} if there is a continuous and
$Df$-invariant splitting $T_{\Lambda}M=E^s\oplus E^u$ such that there are constants $0<\lambda<1$ and $C>0$, satisfying
$$
\|Df_x^k| E^s(x)\|\leq C\lambda^k \quad \text{and}\quad \|Df_x^{-k}| E^u(x)\|\leq C\lambda^k,
$$
for every $x\in \Lambda$ and $n>0$. We say that $f$ is an {\it Anosov diffeomorphism},
if $M$ is a hyperbolic set for $f$. The main result of this article is the following,

\begin{theorem}
Any diffeomorphism in $\gm$ is Anosov.
\label{maintheorem}\end{theorem}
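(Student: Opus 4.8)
The plan is to adapt Ma\~n\'e's proof of the $C^1$ stability conjecture to the conservative setting, the new difficulty being that periodic orbits could a priori have different indices. Fix $f\in\gm$ and a connected $C^1$-neighborhood $\SU\subset\dm$ of $f$ in which every periodic orbit of every diffeomorphism is hyperbolic. I would first establish \emph{uniform} hyperbolicity of periodic orbits: there are constants $C>0$, $0<\la<1$ and $m_0\in\N$ so that for every $g\in\SU$ and every periodic point $p$ of $g$ with $\tau(p)\ge m_0$ one has $\|Dg^{\tau(p)}|_{E^s(p)}\|\le C\la^{\tau(p)}$ and $\|Dg^{-\tau(p)}|_{E^u(p)}\|\le C\la^{\tau(p)}$ along the hyperbolic splitting $E^s\oplus E^u$ of the orbit. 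This follows from a volume-preserving version of Franks' lemma: were the estimate to fail along arbitrarily long periodic orbits, a conservative perturbation inside $\SU$ would produce a periodic point with an eigenvalue of modulus one, contradicting $f\in\gm$. Since a conservative diffeomorphism has neither sinks nor sources, every periodic orbit has index strictly between $0$ and $d$; together with the uniform gap in the exponents, Ma\~n\'e's dominated-splitting argument then provides, for every $g\in\SU$, a dominated splitting over $\overline{\per(g)}$ with constants uniform in $\SU$.

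The heart of the matter is to show that all periodic orbits of $g$ of period $\ge m_0$ share a common index. The set of $g\in\SU$ possessing two such orbits of distinct indices is $C^1$-open, so it is enough to reach a contradiction for a $C^1$-generic $g\in\SU$. For such $g$, by the $C^1$ closing lemma and the Bonatti--Crovisier connecting lemma in the volume-preserving category, $\per(g)$ is dense in $M$ and $g$ is chain transitive; hence two periodic orbits of distinct indices can be joined, in both directions, by pseudo-orbits, and the connecting lemma turns these into genuine periodic orbits that shadow prescribed concatenations of the two. Writing the derivative along such an orbit as a product of the return cocycles of the two original orbits and tuning the number of turns spent near each, together with a conservative Franks perturbation, one realizes an eigenvalue of modulus one inside $\SU$ --- a contradiction. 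Thus the common index is well defined for every $g\in\SU$; being locally constant and avoiding $0$ and $d$, it equals a fixed $i\in\{1,\dots,d-1\}$ throughout $\SU$, so the dominated splitting over $\overline{\per(g)}$ has $\di E=i$.

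Finally I would upgrade the dominated splitting to a hyperbolic one. For a $C^1$-generic $g\in\SU$ we have $\overline{\per(g)}=M$, so there is a global dominated splitting $TM=E\oplus F$ with $\di E=i$. If $E$ were not uniformly contracted, applying Ma\~n\'e's ergodic closing lemma to an invariant measure supported where contraction fails would produce a periodic orbit along which $\|Dg^{\tau}|_{E}\|$ violates the exponential bound of the first step --- impossible; the symmetric argument handles $F$, the constraint $|\det Dg|\equiv1$ keeping the two directions in balance. Hence every $C^1$-generic $g\in\SU$ is Anosov, and since all the constants obtained above are uniform over $\SU$, the set of Anosov diffeomorphisms in $\dm$ with a fixed pair of hyperbolicity constants is closed in $\dm$ and contains a dense subset of $\SU$; therefore it contains all of $\SU$, in particular $f$, which is thus Anosov. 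I expect the constant-index step to be the genuine obstacle: it is where the conservative connecting and closing lemmas are needed, and where one must take care that every perturbation stays volume preserving.
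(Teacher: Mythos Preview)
Your outline is correct and follows essentially the same strategy as the paper: a volume-preserving Franks lemma yields Ma\~n\'e--Liao uniform estimates and domination over $\overline{\per(g)}$ (the paper's Proposition~\ref{p.domina}); the constant-index step is obtained by creating a heterodimensional cycle via the conservative connecting lemma and tuning the time spent near each saddle to force a central exponent to zero (the paper's Proposition~\ref{p.index}); and the upgrade from domination to hyperbolicity is done through Ma\~n\'e's ergodic closing lemma exactly as in Section~4.

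The only substantive difference is the endgame. The paper works directly with the given $f$: it shows $\overline{\per(f)}$ is hyperbolic and then uses Pugh's closing lemma, together with a counting argument on periodic points of bounded period, to prove $\overline{\per(f)}=\Omega(f)=M$. You instead prove that a $C^1$-generic $g\in\SU$ is Anosov (using generic density of periodic points to get the splitting globally) and then invoke a closure argument with uniform constants. Your route is valid---the uniform domination you already have makes the limit of Anosov splittings again an Anosov splitting---but it trades Pugh's lemma for an openness/closedness argument. Two small technical points to watch when you fill in the details: first, the estimate you state in step~1 should really be the \emph{product-of-norms} bound $\prod_{j}\|Dg^{m}|_{E^s}(g^{mj}p)\|\le K\la^{k}$ of Proposition~\ref{p.domina}(a), since the ergodic closing lemma contradiction is naturally against that form, not against $\|Dg^{\tau}|_{E^s}\|$; second, in the constant-index step the paper first reduces to consecutive indices $i,\,i+1$ (so the central bundle is one-dimensional) before the tuning argument, which considerably simplifies the Franks perturbation producing the eigenvalue of modulus one.
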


If the manifold is symplectic, i.e. it possess a non-degenerated closed 2-form $\omega$, then we can define the set $\gw$ using only symplectic diffeomorphisms, i.e. which preserves $\omega$, as we did using volume preserving diffeomorphisms. We denote by $\dw$ the space of symplectic diffeomorphisms. It was proved by Newhouse in \cite{Newhouse} that any element of $\gw$ is Anosov.

We observe that, since the neighborhoods of the diffeomorphisms are taken in the respectively spaces $\dm$,  $\dw$, or $\diff^1(M)$ we could take no relation between $\gm$, $\gw$ and $\mathcal{F}^1(M)$ direct from definition. But, as a corollary of the previous theorem we obtain,

\begin{corollary}
$\gw\subset \gm \subset \mathcal{F}^1(M)$.
\end{corollary}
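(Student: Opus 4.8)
The plan is to deduce the corollary directly from Theorem~\ref{maintheorem} by checking the two inclusions separately, using only set-theoretic bookkeeping on the definitions together with the structural theorems already quoted in the introduction.

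For the inclusion $\gm\subset\mathcal{F}^1(M)$, let $f\in\gm$. By Theorem~\ref{maintheorem}, $f$ is Anosov, hence structurally stable in $\diff^1(M)$; in particular there is a $C^1$-neighborhood $\SV\subset\diff^1(M)$ of $f$ consisting of Anosov diffeomorphisms. Every Anosov diffeomorphism has $\Omega(g)$ a hyperbolic set, so all of its periodic points are hyperbolic; thus every $g\in\SV$ has only hyperbolic periodic points, which is exactly the condition defining $\mathcal{F}^1(M)$. Hence $f\in\mathcal{F}^1(M)$. The only subtlety here is that the neighborhood in the definition of $\gm$ is taken inside $\dm$, whereas for $\mathcal{F}^1(M)$ we need one inside $\diff^1(M)$; this is precisely why one cannot argue directly from the definitions and must instead pass through the robust (open) nature of the Anosov property in the full space $\diff^1(M)$.

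For the inclusion $\gw\subset\gm$, note first that $\dw\subset\dm$ (a symplectic diffeomorphism preserves the Liouville volume $\omega^{\wedge d/2}$, up to normalization). Let $f\in\gw$. By Newhouse's theorem \cite{Newhouse}, $f$ is Anosov, and again by structural stability there is a $C^1$-neighborhood $\SV\subset\diff^1(M)$ of $f$ of Anosov diffeomorphisms, so in particular $\SV\cap\dm$ is a $C^1$-neighborhood of $f$ in $\dm$ in which every diffeomorphism is Anosov and therefore has only hyperbolic periodic points. This shows $f\in\gm$.

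The only point that requires a word of care — and the closest thing to an obstacle — is the implicit use of $C^1$ structural stability of Anosov diffeomorphisms in the full space $\diff^1(M)$: one must know that Anosovness is a $C^1$-open condition (which is classical, via the persistence of the hyperbolic splitting under $C^1$ perturbations), and that this openness does not depend on restricting to a volume-preserving or symplectic neighborhood. Granting that, both inclusions follow immediately, completing the proof of the corollary.
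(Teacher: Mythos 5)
Your proof is correct and follows exactly the route the paper intends (the paper states the corollary without writing out the argument): both inclusions reduce, via Theorem~\ref{maintheorem} and Newhouse's theorem respectively, to the $C^1$-openness of the Anosov property in $\diff^1(M)$, which is precisely the device needed to pass between neighborhoods taken in $\dw$, $\dm$, and $\diff^1(M)$. Nothing further is required.
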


Since the arguments to prove the main theorem involves heterodimensional cycles, it is natural to try to relate it with the well known Palis conjecture \cite{Palis} in the volume preserving scenario. In fact the arguments needed to do this are due to Crovisier and they were outlined in \cite{CROVISIER}. Since it is related to our main theorem, we write it here just for sake of completeness.

\begin{corollary}
If $f\in\dm$ is not an Anosov diffeomorphism then it can be approximated by one diffeomorphism, either exhibiting an heterodimensional cycle if the dimension of $M$ is greater than two or exhibiting an homoclinic tangency if the dimension of $M$ is two.
\label{palis conj}\end{corollary}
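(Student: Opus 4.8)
The plan is to derive Corollary~\ref{palis conj} from Theorem~\ref{maintheorem} together with the $C^1$ perturbation techniques available in the conservative category, following the outline of Crovisier in \cite{CROVISIER}. It is convenient to argue by contraposition: assume that $f\in\dm$ cannot be $C^1$-approximated, inside $\dm$, by a diffeomorphism exhibiting a homoclinic tangency when $\di M=2$, nor by one exhibiting a heterodimensional cycle when $\di M>2$; we shall prove that then $f\in\gm$, so that $f$ is Anosov by Theorem~\ref{maintheorem}.

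Suppose, for a contradiction, that $f\notin\gm$. Then there is a sequence $g_n\to f$ in $\dm$ such that each $g_n$ has a non-hyperbolic periodic point $p_n$; that is, $Dg_n^{\tau(p_n)}(p_n)$ has an eigenvalue of modulus one. Fix $n$ large and write $P$ for the orbit of $p_n$. By the conservative version of Franks' lemma we may perturb $g_n$, keeping $P$ periodic, so that the modulus-one eigenvalue is either a genuinely complex root of unity, acting as a rational rotation on an invariant plane, or is equal to $\pm 1$.

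In the complex case, a suitable power of the return map along $P$ acts as the identity on the corresponding central plane, and an arbitrarily $C^1$-small local conservative perturbation supported near $P$ then creates a homoclinic tangency associated with an iterate of $P$; this is the $C^1$ counterpart of the Newhouse mechanism. In the real case, $P$ carries a one-dimensional centre direction on which the return map is the identity, and by moving this eigenvalue off the unit circle one gains control of the index of $P$; combining this freedom with the conservative connecting lemma of Bonatti and Crovisier --- exactly as in \cite{CROVISIER}, and in the spirit of the arguments of Ma\~n\'e and Hayashi --- one produces a pair of periodic orbits of different indices whose invariant manifolds meet, i.e. a heterodimensional cycle. When $\di M=2$ there is no room for such a cycle (every saddle has index one), and the corresponding perturbation instead forces the invariant branches of $P$ to meet tangentially, yielding a homoclinic tangency; this is Ma\~n\'e's dichotomy for surface diffeomorphisms. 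Finally, when $\di M>2$ and the only degeneracy we could reach was a complex eigenvalue, so that a priori we only obtained a homoclinic tangency, one unfolds it by a further arbitrarily small conservative perturbation into a heterodimensional cycle, creating near the tangency a periodic orbit of a new index and closing up a cycle with the connecting lemma. In every case $f$ is $C^1$-approximated, inside $\dm$, by a diffeomorphism with a homoclinic tangency (if $\di M=2$) or a heterodimensional cycle (if $\di M>2$), contradicting our assumption; hence $f\in\gm$, and Corollary~\ref{palis conj} follows from Theorem~\ref{maintheorem}.

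The main obstacle is to perform all of these perturbations while staying in $\dm$: one needs the conservative Franks' lemma and, above all, the conservative connecting lemma, together with the dimension-sensitive mechanisms that convert a weakly non-hyperbolic periodic orbit into a homoclinic tangency or a heterodimensional cycle. None of this machinery is new here; it is due to Crovisier and is described in \cite{CROVISIER}, to which we refer for the details. What Theorem~\ref{maintheorem} contributes is precisely the point that turns this machinery on, namely that a conservative diffeomorphism that is not Anosov must be $C^1$-close to one possessing a non-hyperbolic periodic orbit.
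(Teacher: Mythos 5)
Your overall strategy is the paper's: the contrapositive of Theorem \ref{maintheorem} converts ``not Anosov'' into $f\notin\gm$, hence into the existence of $g\in\dm$ arbitrarily $C^1$-close to $f$ with a non-hyperbolic periodic orbit, and from there the conservative Franks lemma \ref{l.franks} together with the connecting-lemma construction of Proposition \ref{p.index} yields the heterodimensional cycle; like the paper, you defer the hard perturbative details to Crovisier. The one place where you genuinely diverge --- the complex-eigenvalue sub-case --- is also the weak point. You route it through a homoclinic tangency, asserting that once the return map is a rational rotation on the centre plane a small local perturbation ``creates a homoclinic tangency associated with an iterate of $P$''; but after that perturbation $P$ is partially elliptic, so it has no invariant manifolds of the appropriate dimensions to be tangent, and manufacturing a tangency from an elliptic orbit is a nontrivial theorem, not a one-line $C^1$ perturbation. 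You then need a second unjustified step in dimension $>2$, namely unfolding a conservative tangency into a heterodimensional cycle. Neither detour is necessary: if the modulus-one eigenvalue is complex, Franks' lemma lets you scale its modulus slightly up or down, producing nearby conservative diffeomorphisms whose continuations of $P$ are hyperbolic with indices $i$ and $i+2$; Proposition \ref{propABCDW}, or directly the cycle construction of Proposition \ref{p.index} applied to these two saddles, then gives the heterodimensional cycle. That is essentially what the paper does (it bifurcates the non-hyperbolic orbit into saddles of different indices and repeats the argument of Proposition \ref{p.index}, citing \cite{Crovisier2} for details), and for surfaces it does not argue at all but invokes Newhouse \cite{Newhouse}, since every area-preserving surface diffeomorphism is symplectic --- a cleaner disposal of the two-dimensional case than your appeal to ``Ma\~n\'e's dichotomy''.
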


Actually, the statement for surfaces was proved by Newhouse in \cite{Newhouse}. Since for surfaces any volume preserving diffeomorphism is symplectic.

This paper is organized as follows. In section 2, we recall the Franks lemma in the volume preserving scenario,  which is one of the main tools used in the proof. In section 3, we prove that the index for hyperbolic periodic points is constant in a neighborhood of any $f\in \gm$. In section 4, we give a proof of our main theorem. In section 5, we recall Crovisier's arguments about Palis conjecture. In section 6, we point out how the arguments in the previous sections could be use to prove some analogous results of Gan and Wen \cite{GW}, Toyoshiba \cite{Toyo} and see how this extends a corollary of Bessa and Rocha, in \cite{BR} to higher dimensions. Finally, in the appendix we describe the adaptations to the volume preserving case of an argument by Ma\~n\'e.

\section{Franks-type Lemma}

One of the most useful and basic perturbation lemmas is the Franks lemma \cite{FRANKS}. This lemma enable us to perform non-linear perturbations along a finite piece of an orbit simply performing arguments from Linear Algebra.

In what follows, we will recall this lemma in the volume-preserving  scenario, which is contained in the work of Bonatti-Diaz-Pujals \cite{BDP}, proposition 7.4. See also \cite{LLS}.

\begin{lemma}
\label{l.franks}
Let $f\in \dm$ and $\SU$ be a $C^1$-neighborhood of $f$ in $\dm$. Then, there exist a neighborhood $\SU_0\subset \SU$ of $f$ and $\delta>0$ such that if $g\in \SU_0(f)$, $S=\{p_1,\dots,p_m\}\subset M$ and $\{L_i:T_{p_i}M\to T_{p_{i+1}}M\}_{i=1}^m$ are linear maps belonging to $SL(d)$ satisfying $\|L_i-Dg(p_i)\|\leq\delta$ for $i=1,\dots m$ then there exists $h\in\SU(f)$ such that $h(p_i)=g(p_i)$ and $Dh(p_i)=L_i$.
\end{lemma}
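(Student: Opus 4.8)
The plan is to reduce the statement to its linear counterpart and then patch the perturbation into the global diffeomorphism using area/volume-preserving bump constructions. First I would fix local coordinate charts around each $p_i$ and $g(p_i)$ in which $g$ is represented, to first order, by the linear map $Dg(p_i)$, and in which the volume form $m$ becomes the standard Lebesgue measure on $\mathbb R^d$; this is possible because Moser's theorem (or Dacorogna--Moser) lets us straighten any smooth volume form to the standard one, and these charts can be chosen so small that the images $g(\text{chart around }p_i)$ are disjoint from all other charts. The key reduction is then: it suffices, given a linear map $L_i \in SL(d)$ with $\|L_i - Dg(p_i)\|$ small, to build a diffeomorphism $\varphi_i$ supported in a small ball around $p_i$, preserving Lebesgue measure, which coincides with the identity near the boundary of that ball and whose derivative at $p_i$ is $L_i \cdot Dg(p_i)^{-1}$ (after transporting by $g$). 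Composing $g$ with these finitely many localized corrections yields $h$; since the supports are disjoint and contained in $\SU$-small balls, $h$ stays in $\SU(f)$ provided $\delta$ and the chart radii are chosen small enough at the outset.

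The heart of the matter is therefore the following purely local claim: there is $\delta>0$ and a radius $r>0$ such that every $A \in SL(d)$ with $\|A - \mathrm{Id}\| \le \delta$ is realized as the derivative at $0$ of a Lebesgue-measure-preserving diffeomorphism of $\mathbb R^d$ equal to the identity outside $B_r(0)$, and moreover this diffeomorphism is $C^1$-close to the identity on all of $\mathbb R^d$ with closeness controlled by $\delta$. To prove this I would write $A = \exp(B)$ with $B$ traceless and $\|B\|$ small, then interpolate: set $\psi_t(x) = \exp(t\,\rho(|x|)\,B)\,x$ where $\rho$ is a smooth cutoff equal to $1$ near $0$ and $0$ outside $B_r$. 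This is isotopic to the identity but is \emph{not} volume-preserving in general, so the second step is to correct it by a Moser-type argument: the Jacobian of $\psi_1$ is a smooth function equal to $1$ near $0$ and near $\partial B_r$, so by solving a divergence equation (with the solution supported in an annulus bounded away from the origin) one finds a further diffeomorphism $\theta$, supported in that annulus hence with $D\theta(0) = \mathrm{Id}$, such that $\theta \circ \psi_1$ is exactly volume-preserving; one keeps track of the fact that all $C^1$-norms are $O(\delta)$.

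The main obstacle I anticipate is exactly this volume-preserving correction: one must ensure that the Moser deformation fixing the Jacobian does not disturb the prescribed derivative at the center and does not destroy the $C^1$-smallness. This is handled by noting that the Jacobian of $\psi_1$ already equals $1$ identically on a small ball around $0$ (since $\rho \equiv 1$ there and $\det \exp(tB)=1$), so the divergence equation to be solved has right-hand side supported away from $0$, and its solution vector field — chosen e.g. via an explicit radial primitive or the Bogovskii operator on the annulus — can be taken with support in the same annulus; the time-one map of its flow is then the identity near $0$, giving $D\theta(0)=\mathrm{Id}$ for free, while the standard elliptic estimates give $\|\theta - \mathrm{Id}\|_{C^1} = O(\|\det \psi_1 - 1\|_{C^0}) = O(\delta)$. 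Finally I would assemble the pieces: choose $\SU_0$ and $\delta$ so small that the global map $h$ built by conjugating all the $\varphi_i = \theta_i\circ\psi_i^{(1)}$ back to $M$ and composing with $g$ lies in $\SU(f)$, and observe by construction that $h(p_i)=g(p_i)$ and $Dh(p_i)=L_i$, which is the assertion.
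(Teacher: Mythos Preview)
The paper does not supply a proof of this lemma: it is quoted from Bonatti--D\'{\i}az--Pujals (Proposition~7.4 of \cite{BDP}; see also \cite{LLS}), so there is no argument in the text for you to compare against.

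On its own merits your sketch is essentially correct and follows a standard route. The reduction to local Moser charts, writing $A=\exp(B)$ with $B\in\mathfrak{sl}(d)$, the interpolation $\psi_1(x)=\exp(\rho(|x|)B)\,x$, and the observation that the Jacobian defect of $\psi_1$ is supported in an annulus away from $0$ (since $\det\exp(B)=1$ where $\rho\equiv 1$) are all sound. The Moser/Bogovskii correction then produces $\theta$ supported in that same annulus, so $D\theta(0)=\mathrm{Id}$ and the prescribed derivative at the origin survives; composing the localized corrections with $g$ on disjoint charts gives $h\in\SU(f)$ as claimed.

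One small imprecision worth fixing: the bound $\|\theta-\mathrm{Id}\|_{C^1}=O(\|\det D\psi_1-1\|_{C^0})$ is not literally what the divergence/Bogovskii estimates deliver; to control $\theta$ in $C^1$ one needs the source term in at least $C^{0,\alpha}$ or $C^1$, not merely $C^0$. This is harmless here because $\psi_1-\mathrm{Id}$ is $O(\|B\|)$ in every $C^k$ norm (the implied constant depending on $\rho$ and $k$ but not on $B$), so the Jacobian defect is $O(\delta)$ in $C^1$ as well and the correction goes through. Just state the estimate at the correct regularity when you write it out in full.
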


\section{Index of Periodic Orbits}

In this section we analyze the index of periodic orbits of diffeomorphisms in $\gm$. By definition the index of a hyperbolic periodic orbit is the dimension of its stable space. We will see that in the volume preserving case, the property of have two periodic hyperbolic saddles with different indices cannot happen if $f\in \gm$. The main result in this section is the following.

\begin{proposition}
\label{p.index}
Let $f\in \gm$ then there exist a neighborhood $\SU$ of $f$ in $\dm$ and an integer $i$ such that for every diffeomorphism $g\in\SU$ and every hyperbolic periodic orbit $p$ of $g$, the index of $p$ with respect to $g$ is $i$.
\end{proposition}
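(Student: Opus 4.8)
The plan is to argue by contradiction, exploiting that in the conservative world periodic orbits of different indices cannot be confined to disjoint pieces. Suppose the statement fails. First I would record the elementary remark that, for a volume preserving diffeomorphism, no hyperbolic periodic orbit is a sink or a source, since the product of the eigenvalues of the return map has modulus one; hence every index lies in the finite set $\{1,\dots,d-1\}$. Consequently the negation of the proposition yields a sequence $g_n\to f$ in $\dm$ and two fixed integers $j<k$ such that each $g_n$ has a hyperbolic periodic orbit of index $j$ and one of index $k$. Taking $n$ large, I would fix once and for all a diffeomorphism $g=g_n$ lying in a neighborhood $\SU$ of $f$ along which every periodic orbit of every diffeomorphism is hyperbolic (such a $\SU$ exists precisely because $f\in\gm$), together with its two orbits $p$ (index $j$) and $q$ (index $k$).

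The next step is to bring $p$ and $q$ into the same chain recurrence class by a perturbation that stays inside $\SU$. Using that $C^1$-generic conservative diffeomorphisms are transitive (Bonatti--Crovisier) and that hyperbolic periodic orbits depend continuously on the diffeomorphism, I would replace $g$ by a transitive $g'\in\SU$, arbitrarily $C^1$-close, still carrying hyperbolic orbits $p'$, $q'$ of indices $j$, $k$; transitivity makes $M$ the unique chain recurrence class of $g'$, so $p'$ and $q'$ are chain related. Then, via the connecting lemma for pseudo-orbits in the conservative setting --- the tool Crovisier uses in \cite{CROVISIER} --- I would reduce to the case $k=j+1$: when two periodic orbits of indices $j<k$ are chain related for a generic conservative diffeomorphism, an arbitrarily small perturbation inside $\SU$ produces a periodic orbit of each intermediate index in the same class, so after relabelling one may assume the indices are consecutive and the two orbits remain chain related.

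Now comes the core of the argument. Since $p'$ and $q'$ are chain related, the connecting lemma for pseudo-orbits gives $g''\in\SU$, $C^1$-close to $g'$, exhibiting a heterodimensional cycle between the continuations of $p'$ and $q'$, with $\indi(p'')=\indi(q'')-1$, and with $W^u(p'')$ meeting $W^s(q'')$ and $W^u(q'')$ meeting $W^s(p'')$. Concatenating a long stretch shadowing the orbit of $p''$, a heteroclinic orbit from $p''$ to $q''$, a long stretch shadowing $q''$, and a heteroclinic orbit back, I obtain a periodic pseudo-orbit along which the linear cocycle of $Dg''$ interpolates between an index-$j$ hyperbolic pattern and an index-$(j+1)$ one; since the two patterns differ only by one eigenvalue crossing the unit circle, there is a path in $SL(d)$ joining the relevant linear maps with arbitrarily small consecutive steps. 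Feeding this path into the volume preserving Franks Lemma \ref{l.franks} and closing the pseudo-orbit up (again by the connecting lemma) yields $h\in\SU$ possessing a genuine periodic orbit whose return map has an eigenvalue of modulus exactly one --- a non-hyperbolic periodic orbit inside $\SU$, contradicting the definition of $\gm$. This contradiction proves the proposition.

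I expect the delicate point to be this last step: one must carry out the eigenvalue interpolation \emph{within} $SL(d)$ and with steps small enough to fit the Franks neighborhood $\SU_0$ of Lemma \ref{l.franks}, while keeping enough control of the pseudo-orbit to close it into an honest periodic orbit; this is where the volume preserving Franks lemma and the conservative connecting lemma do the real work, and where one verifies that the classical constructions (cf. \cite{BDP}) survive the incompressibility constraint. The reduction to consecutive indices is the other place that genuinely needs the conservative generic machinery, and it is here that working in $\dm$ rather than $\diff^1(M)$ is essential: transitivity is precisely what forbids orbits of different indices from living in disjoint hyperbolic blocks.
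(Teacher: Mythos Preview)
Your overall architecture matches the paper's: argue by contradiction, pass to a transitive nearby conservative diffeomorphism, reduce to consecutive indices, build a heterodimensional cycle via the conservative connecting lemma, and then manufacture a non-hyperbolic periodic orbit inside $\SU$ using the volume-preserving Franks lemma. The reduction to consecutive indices is indeed handled by the conservative version of the Abdenur--Bonatti--Crovisier--D\'iaz--Wen result (Proposition~\ref{propABCDW}), so that part of your sketch is legitimate once one checks, as the paper does, that the relevant perturbations survive in $\dm$.

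The genuine gap is in your ``core'' step. Saying that along the closed pseudo-orbit the cocycle ``interpolates between an index-$j$ pattern and an index-$(j+1)$ one'' and that ``there is a path in $SL(d)$ joining the relevant linear maps'' does not produce a periodic return map with an eigenvalue on the unit circle. Franks' lemma only allows $\delta$-small changes at each point of the orbit; it does not let you feed in an arbitrary path of matrices, and nothing in your description forces the product over the period to land on the bifurcation locus. The paper supplies the missing mechanism via Proposition~\ref{p.domina}: since periodic points of index $i$ and of index $i+1$ are both dense (by Bonatti--Crovisier and the residual argument), the two dominated splittings coexist and refine to $TM=E\oplus C\oplus F$ with a \emph{one-dimensional} center $C$. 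One then closes the cycle into periodic points $p_{mn}$ that spend $m$ iterates near $p$ and $n$ near $q$; the index of $p_{mn}$ is governed solely by the sign of the Lyapunov exponent along $C$. Choosing $m$ to be the threshold at which this sign flips, a direct estimate (inequalities (\ref{equ6})--(\ref{eq7})) traps the central exponent in $(0,4\gamma+K/\tau)$, hence arbitrarily close to zero, and a final small Franks perturbation along $C$ kills it. Your sketch lacks both the dominated splitting giving the one-dimensional $C$ and the quantitative tuning of $m$; without them the passage from ``heterodimensional cycle'' to ``eigenvalue of modulus one'' is not justified.
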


This can be seen through the creation of heterodimensional cycles, thus the proposition can be seen as the volume-preserving and discrete version of a result by Gan and Wen (see theorem 4.1 of \cite{GW}). But, we can also apply a volume-preserving version of a result by Abdenur-Bonatti-Crovisier-Diaz-Wen (corollary 2 of \cite{ABCDW}). In fact, if we define the Lyapunov exponent vector of a hyperbolic periodic point $p$ of period $\tau(p)$ by
$$v=(\frac{\log|\mu_1|}{\tau(p)},\dots,\frac{\log|\mu_d|}{\tau(p)}),$$
where $\mu_1,\dots,\mu_d$ are the eigenvalues of $Df^{\tau(p)}(p)$ ordered by their moduli. Then the volume preserving version of corollary 2 of \cite{ABCDW} would gives us a residual subset of $\dm$ such that for any homoclinic class of a diffeomorphism belonging to this residual subset, the closure of the set of Lyapunov vectors of the saddles of this homoclinic class is convex. This could be done, using the available perturbation tools in the volume preserving case, the Pasting lemma \cite{A} and the regularization theorem of \'Avila \cite{Avila}. Moreover, by a result of Bonatti and Crovisier \cite{BONATTICROVISIER}, we can assume that for any difffeomorphism in this residual subset the whole manifold is an homoclinic class. Thus, this would give us a saddle with an eigenvalue with norm close to one and by Franks lemma \ref{l.franks} this would create a non-hyperbolic point, after an small perturbation.

However, since we want to show the relation of these objects with heterodimensional cycles, we will give a proof based on some results by Ma\~n\'e and Gan-Wen and we will explain why the arguments of \cite{ABCDW} still are valid in the volume preserving context.

First of all, we will recall some good properties of the periodic set that will be very useful. We say that an $f$-invariant compact set $\Lambda$ has a {\it dominated splitting} if there exist a continuous splitting $T_{\Lambda}M=E\oplus F$ and constants $m\in\N$, $0<\lambda<1$ such that for every $x\in \Lambda$ we have:
$$
\|Df_x^m| E(x)\|\; \|Df_{f^m(x)}^{-m}| F(f^m(x))\|\leq \lambda.
$$

Now, let $\Lambda_i(f)$ denote the close of the set formed by hyperbolic periodic points of $f$ with index $i$.
The following proposition which is the volume preserving version of a result by Ma\~n\'e, proposition II.1 of \cite{Mane} (see also the work of Liao \cite{L}) is essential. It can be deduced from Franks Lemma \ref{l.franks} and adapting some arguments of Ma\~n\'e. We will give the necessarily adaptations in an appendix.

\begin{proposition}
\label{p.domina}
If $f\in \gm$, there exist a neighborhood $\mathcal{U}$ of $f$ in $\dm$, and constants  $K>0$, $m\in\N$ and $0<\lambda<1$ such that
\begin{itemize}
\item[a)] For every $g\in\mathcal{U}$ and $p\in Per(g)$ with minimum period $\tau(p)\geq m$
$$
\prod_{i=0}^{k-1} \|Dg^m(g^{mi}(p))| E^s_g(g^{mi}(p))\|\leq K\lambda^{k}$$ and $$ \prod_{i=0}^{k-1} \|Dg^{-m}(g^{-mi}(p))| E^u_g(g^{-mi}(p))\|\leq K\lambda^{k},
$$
where $k=[\tau(p)/m]$.

\item[b)] For all $0<i<dim M$ there exists a continuous splitting $T_{\Lambda_i(g)}M=E_i\oplus F_i$ such that
$$
\|Dg^m(x)| E_i(x)\|\; \|Dg^{-m}(g^m(x))| F_i(g^m(x))\|\leq \lambda.
$$
for all $x\in \Lambda_i(g)$ and $E_i(p)=E^s_g(p)$, $F_i(p)=E^u_{g}(p)$ when $p\in Per(g)$ and $dim E^s_{g}(p)=i$.

\item[c)] For all $p\in Per(g)$
$$
\limsup_{n\rightarrow +\infty} \frac{1}{n}\sum_{i=0}^{n-1}\log \|Dg^m(g^{mi}(p))| E^s_g(g^{mi}(p))\|<0
$$
and
$$
\limsup_{n\rightarrow +\infty} \frac{1}{n}\sum_{i=0}^{n-1}\log \|Dg^{-m}(g^{-mi}(p))| E^u_g(g^{-mi}(p))\|<0.
$$
\end{itemize}
\label{DS prop}\end{proposition}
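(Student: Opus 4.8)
The plan is to deduce the statement from the fact that $f \in \gm$, i.e. that all periodic points of all $g$ in a $C^1$-neighborhood of $f$ are hyperbolic, combined with the volume-preserving Franks lemma \ref{l.franks}. The first step is to establish item (a), which is essentially a uniform hyperbolicity estimate along periodic orbits. The key observation is that if (a) fails for some sequence of periodic orbits, then along some subarc the product of the norms $\|Dg^m|E^s\|$ fails to contract uniformly; by a selecting-subarc argument (the standard Liao/Ma\~n\'e trick: if the geometric means do not go below a fixed $\lambda<1$, one can cut the orbit into pieces each of which has geometric mean close to $1$) one produces, for nearby $g$, a periodic orbit along which $Dg^m$ restricted to the stable direction is very close to an isometry in the appropriate averaged sense. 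Then Franks lemma \ref{l.franks} lets us perturb inside $\dm$ — crucially, the perturbations are realized by linear maps in $SL(d)$, which is exactly the volume-preserving constraint — to turn the weak contraction on $E^s$ into a genuine eigenvalue of modulus one, contradicting $f \in \gm$. Here I must be careful that the perturbation keeps the map in $SL(d)$: rotating a weakly-contracting two-dimensional block toward a rotation, or rescaling an eigenvalue toward $1$, can be compensated in the complementary directions while staying in $SL(d)$, since $\dim M \geq 2$; this compensation is where the volume-preserving case genuinely differs from the dissipative one and is the step requiring the appendix adaptations.

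The second step, item (b), is to upgrade the orbitwise contraction/expansion estimates of (a) into a genuine dominated splitting over the compact set $\Lambda_i(g)$. The mechanism is Ma\~n\'e's: uniform estimates of the form in (a) along all periodic orbits of a fixed index $i$, holding robustly for all nearby $g$, force the existence of a dominated splitting $E_i \oplus F_i$ on $\overline{\per_i(g)}$ with the stated domination constant. This is a compactness-plus-cone-field argument — one defines candidate cone fields using the periodic data, shows they are invariant and contracted using (a) together with the robustness under perturbation (so that if the cone field were not strictly invariant somewhere, a Franks perturbation would again destroy hyperbolicity of a nearby periodic point), and extracts the splitting by taking maximal invariant sub-bundles. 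The agreement $E_i(p) = E^s_g(p)$, $F_i(p) = E^u_g(p)$ at periodic points of index $i$ is automatic from the construction, since the stable/unstable bundles are the only $Dg$-invariant bundles of the right dimensions at a hyperbolic saddle.

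The third step, item (c), is then nearly immediate: for a fixed periodic point $p \in \per(g)$ of some index $j$, the $\limsup$ of the Birkhoff averages of $\log\|Dg^m|E^s_g\|$ along $p$'s orbit is, by periodicity, just the actual average over one period, and by item (a) applied to $p$ (or directly by hyperbolicity of $p$, which holds since $g$ is near $f \in \gm$) this average is bounded above by $\tfrac{1}{m}\log\lambda < 0$; the symmetric statement for $Dg^{-m}|E^u_g$ is the same argument for $g^{-1}$. The main obstacle in the whole proof is the perturbation step in (a)–(b): one must perform Ma\~n\'e's destabilization of a non-uniformly-hyperbolic periodic orbit while respecting the $SL(d)$ constraint of Lemma \ref{l.franks}, which forces all the perturbative norm adjustments to be volume-neutral. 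Making these adjustments precise — choosing the complementary compensation so that the perturbed cocycle still has an eigenvalue of modulus $1$ — is exactly the technical content relegated to the appendix, and I would carry it out there rather than inline.
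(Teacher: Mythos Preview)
Your overall strategy matches the paper's: reduce everything to linear algebra about periodic cocycles valued in $SL(d)$, then adapt Ma\~n\'e's perturbation arguments with determinant compensation in complementary directions, exactly as the appendix does. Your identification of the $SL(d)$ constraint in Franks' lemma as the main technical obstacle is correct, and your outline for (b) --- if strict invariance of the cones fails, a Franks perturbation destroys hyperbolicity --- is in the right spirit, though the paper organizes it via an explicit angle bound (Ma\~n\'e's Lemma~II.9, whose shear perturbation is already volume-preserving) followed by Ma\~n\'e's $\eta^t$ construction, normalized to determinant one.

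There is, however, a genuine gap in your treatment of (c). It does not follow from (a): the estimate in (a) carries the multiplicative constant $K$, so averaging the logarithms over one period yields only the bound $(\log K)/k + \log\lambda$ with $k=[\tau(p)/m]$, which need not be negative when $k$ is small. Nor does it follow ``directly from hyperbolicity of $p$'': hyperbolicity bounds the eigenvalues of $Dg^{\tau(p)}|E^s_g$, not the operator norms $\|Dg^m|E^s_g\|$ along the orbit, and a contracting Jordan block can have arbitrarily large norm at a single step. In the paper (following Ma\~n\'e's Lemma~II.7), items (a) and (c) are obtained \emph{together} from the lemma on uniformly contracting families, whose part~(2) supplies the $\limsup$ estimate via its own Pliss-type selection argument; (c) is not deduced from (a) after the fact.
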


\begin{proof}[Proof of Proposition \ref{p.index}]

If proposition \ref{p.index} is not true, then there exist two hyperbolic periodic orbits $p$ and $q$ of $f$ with respectively indices $i$ and $i+j$, for some $j>0$.

Now, we state a result by Abdenur-Bonatti-Crovisier-Diaz-Wen in \cite{ABCDW} in the volume preserving case, we slightly modify the statement.

\begin{proposition}
For any neighborhood $\mathcal{U}$ of $f\in \dm$, if there exist $p,q\in Per(f)$ with indices $i$ and $i+j$, respectively, then for any positive integer $\alpha$ between $i$ and $i+j$ there exist $g\in\mathcal{U}$ and a hyperbolic periodic point of $g$ with index $\alpha$.
\label{propABCDW}\end{proposition}

We will explain why this proposition holds in the volume preserving case later. Using this proposition, we can find hyperbolic periodic points $p$ and $q$ of $f$, by some perturbation of $f$, with indices $i$ and $i+1$, respectively.

In the sequence, we will see how to perturb $f$ in order to get a heterodimensional cycle between these two hyperbolic periodic points.
First, we remember a result by Bonatti-Crovisier \cite{BONATTICROVISIER}:

\begin{lemma}
There exists a residual subset $\SR$ of $\dm$ such that if $g\in \SR$ then $M=H(p,g)$, where $H(p,g)$ is the homoclinic class for a hyperbolic periodic point $p$ of $g$. In particular, $g$ is transitive.
\label{BC}\end{lemma}

Hence, perturbing and using the hyperbolicity of $p$ and $q$, we can suppose that our diffeomorphism $f\in \SR$ and so it is transitive. Then using the connecting lemma of Hayashi for conservative diffeomorphisms, see \cite{BONATTICROVISIER}, we can create an intersection between $W^u(p)$ and $W^s(q)$, also perturbing if necessary, we can assume that this intersection is transversal. Hence, this intersection is robust, and we can suppose that this new diffeomorphism also belongs to $\SR$. Using the connecting lemma once more, we can create an intersection between $W^s(p)$ and $W^u(q)$. Thus we create a heterodimensional cycle. We observe that this type of argument appears in \cite{Ab}.

As we comment above, what we want to do now is to find a periodic point with at least one Lyapunov exponent close to zero. We can suppose that $p$ and $q$ are fixed points.

Now, let $\mathcal{R}_1$ be the set of volume preserving diffeomorphisms where the homoclinic class are disjoint or coincide. Using a result by Carballo, Morales and Pacifico in \cite{CMP} we know that $\mathcal{R}_1$ is a residual subset in $\dm$.  So, we can assume that $f\in\SR\cap \mathcal{R}_1$. Hence, $M=\Lambda_i(f)=\Lambda_{i+1}(f)$, i.e., hyperbolic periodic points with indices $i$ and $i+1$ are dense in $M$. Therefore, using Proposition \ref{DS prop} we have a dominated splitting for $f$, $TM=E\oplus C\oplus F$, such that the dimension of $E$ and $C$ are equal to $i$ and one, respectively. And thus, by the continuation of the dominated splitting we still have this one for the perturbation of $f$ that exhibits a heterodimensional cycle between $p$ and $q$.

Let $\mathcal{U}$ be some neighborhood of $f$ in $\gm$ and $\mathcal{U}_1\subset \mathcal{U}$ such that we still have the previous dominated splitting
for every $g\in \mathcal{U}_1$.

We recall now a perturbation result of Xia in \cite{Xia}.
\begin{lemma}
Fix $\phi\in \dm$, there exist constants $\eps_0>0$  and $c > 0$, depending only of
$\phi$, such that for any $x\in M$, any $\psi\in \dm$ $\eps_0-C^1$ close to $\phi$
and any positive numbers $0<\delta<\eps_0$ and $0<\eps<\eps_0$, we have that if $d(y,x) < c\delta\eps$, then there is a $\psi_1\in \dm$ $\eps-C^1$ close to $\psi$  such that
$\psi_1(\psi^{-1}(x))=y$ and $\psi_1(z)=z$ for all $z\not\in \psi^{-1}(B_{\delta}(x))$.
\label{xia lemma}\end{lemma}

Then, we fix $\eps_0$ and $c>0$ for $f$ according the previous lemma. Now, let $0<\eps<\eps_0$ be such that if $f_1\in \dm$ is $\eps-C^1$ close to $f$ then $f\in \mathcal{U}_1$. Let $x\in W^s(p)\cap W^u(q)$ and $y\in W^u(p)\cap W^s(q)$.

Now, let $B_{p}$ and $B_{q}$ be small balls in $M$ centered at $p$ and $q$, respectively. Moreover, given any $\gamma>0$ we may choose $B_{p}$ such that $\|Df(z)-Df(p)\|\leq \gamma$, for every $z\in B_{p}$. By the choice of $x$ and $y$ we can choose $m_1, m_2, m_3$ and $m_4$ positive integers such that $f^{m_1}(x), f^{-m_3}(y)\in B_{p}$ and $f^{-m_2}(x), f^{m_4}(y)\in B_{q}$. Now, let $0<\delta<\eps_0$ such that
$$
f^{-1}(B_{\delta}(f^{m_1}(x)))\cap B_{\delta}(f^{m_1}(x))=\varnothing,\text{  } f^{-1}(B_{\delta}(f^{-m_2+1}(x)))\cap B_{\delta}(f^{-m_2+1}(x))=\varnothing,
$$
and
$$
f^{-1}(B_{\delta}(f^{m_4}(y)))\cap B_{\delta}(f^{m_4}(y))=\varnothing,\text{  } f^{-1}(B_{\delta}(f^{-m_3+1}(y)))\cap B_{\delta}(f^{-m_3+1}(y))=\varnothing.
$$

Using the $\lambda$-Lemma, we can find $z_m$ $c\delta\eps-$close to $f^{m_1}(x)$ such that $f^m(z_m)$ is also $c\delta\eps-$close to $f^{-m_3}(y)$ and $f^{r}(z_m)\in B_{p}$, $0\leq r\leq m$, for every $m>0$ large enough. 
Analogously, we can find $\overline{z}_n$ satisfying similar conditions exchanging $p$ for $q$ and the respectively iterates of $x$ and $y$. 

Hence, the set $$O_{mn}=\{z_m,..., f^{m}(z_m),f^{-m_3}(y),..., f^{m_4}(y), \overline{z}_n,..., f^{n}(\overline{z}_n), f^{-m_2}(x),..., f^{m_1}(x)\}$$
is a pseudo periodic orbit. Using lemma \ref{xia lemma}, we can perturb $f$ in order to find a periodic orbit $p_{mn}$ 
that shadows $O_{mn}$. Moreover, note that $\{z_m,..., f^{m-1}(z_m),\newline f^{-m_3}(y),..., f^{m_4-1}(y), \overline{z}_n,..., f^{n-1}(\overline{z}_n), f^{-m_2}(x),..., f^{m_1-1}(x)\}$ is the orbit of the periodic point $p_{mn}$. Moreover, $p_{mn}$ pass $m$ and $n$ times in $B_{p}$ and $B_{q}$, respectively. Furthermore, using the dominated splitting of $f$, we observe that $m$ and $n$ could be chosen such that the index of $p_{mn}$ is either $i+1$ or $i$.


Now, fix some large $n$ and choose $m=m(n)$ as the biggest one such that, $p_{mn}$ and $p_{m-1\,n}$ are hyperbolic periodic points of different perturbations of $f$ with indices $i$ and  $i+1$, respectively. We will call these perturbations of $f$ by $g$ and $h$, i.e., $p_{mn}\in Per(g)$ and $p_{m-1\,n}\in Per(h)$. We would like to remark that the way to perturb $f$ in order to construct these points gives us $g=h$ outside $B_p$. Finally, taking $n$ large enough if necessary we have that $g,h\in \mathcal{U}_1$.


By the previous process we have that the orbit of the hyperbolic periodic points $p_{k n}$, $k=m,\, m-1$,  is
$$\{z_k,..., f^{k-1}(z_k),f^{-m_3}(y),..., f^{m_4-1}(y), \overline{z}_n,..., f^{n-1}(\overline{z}_n), f^{-m_2}(x),..., f^{m_1-1}(x)\},$$
 where $z_k$ and $\overline{z}_n$ can be found by $\lambda-$lemma, depending of $k$, as before. 

Now, denoting by $\tau$ the period of $p_{mn}=f^{-m_3}(y)$ and taking $K=\|Df(p)|C(f)\|$, we have
\begin{align}
0&<\frac{1}{\tau}\log\|Dg^{\tau}(p_{mn})|_{C(g)}\|
=\frac{1}{\tau}\sum_{t=0}^{\tau-1}\log\|Dg(g^t(p_{mn}))|_{C(g)}\|
\nonumber\\
&<\frac{1}{\tau}(\sum_{t=0}^{\tau-1}\log\|Df(g^t(p_{mn}))|_{C(f)}\|+\gamma)
\nonumber\\
&\leq\frac{1}{\tau}(\sum_{t=0}^{\tau-m-1}\log\|Df(g^t(p_{mn}))|_{C(f)}\|+m(\log\|Df(p)|_{C(f)}\|+\gamma)+\gamma\tau)
\nonumber\\
&<\frac{1}{\tau-1}(\sum_{t=0}^{\tau-m-1}\log\|Df(g^t(p_{mn}))|_{C(f)}\|+(m-1)(\log\|Df(p)|_{C(f)}\|))+2\gamma+\frac{K}{\tau},
\label{equ6}\end{align}
where we use that the central direction $C$ is one-dimensional in the first equality, the continuity of the dominated splitting in the second line and the choice of $B_{p}$ in the third one.
On the other hand, using the hyperbolic periodic point $p_{m-1\,n}=f^{-m_3}(y)$ of $h$ and similarly arguments we have the following:
\begin{align}
0&>\frac{1}{\tau-1}\log\|Dh^{\tau-1}(p_{m-1\,n})|_{C(h)}\|
=\frac{1}{\tau-1}\sum_{t=0}^{\tau-1}\log\|Dh(h^t(p_{m-1\, n})|_{C(h)}\|
\nonumber\\
&>\frac{1}{\tau-1}(\sum_{t=0}^{\tau-1}\log\|Df(h^t(p_{m-1\, n}))|_{C(f)}\|-\gamma)
\nonumber\\
&\geq\frac{1}{\tau-1}(\sum_{t=0}^{\tau-m-1}\log\|Df(h^t(p_{m-1\, n}))|_{C(f)}\|+(m-1)(\log\|Df(p)|_{C(f)}\|))-2\gamma.
\label{eq7}\end{align}
Now, since $g=h$ outside $B_p$ and the orbit of $p_{mn}$ and $p_{m-1\, n}$ also coincides outside $B_p$ we have $g^t(p_{mn})=h^t(p_{m-1\,n})$ for $0\leq t\leq \tau-m-1$. Hence, we can replace the inequality (\ref{eq7}) in (\ref{equ6}), to obtain the following

\begin{align}
0&<\frac{1}{\tau}\log\|Dg^{\tau}(p_{mn})|C(g)\|< 4\gamma +\frac{K}{\tau}.
\end{align}

Therefore, since the period $\tau$ goes to infinity when $n$ goes to infinity, and $\gamma>0$ is arbitrary,  it is possible to find a hyperbolic periodic point $p_{mn}$  with a Lyapunov exponent sufficiently close to zero.

In the general case, when $p$ and $q$ are hyperbolic periodic points, the difference is that the neighborhoods $B_p$ and $B_q$ must be neighborhoods of the orbits of $p$ and $q$, respectively, and then the numbers $m$ and $n$ will be multiples of the periods of $p$ and $q$, respectively. Hence, by the same arguments as before we can find the periodic point $p_{mn}$ with at least one Lyapunov exponent sufficiently close to zero.

Finally, using Franks lemma \ref{l.franks} again, we can perturb it once more such that we reduce a little bit the force of the eigenvalue associated to the Lyapunov exponent close to zero in each point of the orbit of $p_{mn}$ such that we can get indeed a zero Lyapunov exponent for the periodic point $p_{mn}$. This means, we have an eigenvalue with absolute value one, and then $p_{mn}$ is not a hyperbolic periodic point. Since all of these perturbations can be done inside $\mathcal{U}\subset \mathcal{F}_m^1(M)$ we have a contradiction.
\end{proof}


\begin{remark}
We observe that we could do the same arguments, using only that there are periodic points with different indices, as it is done in \cite{GW}. But, the proof is slightly simplified after that we found periodic points with indices $i$ and $i+1$.
\end{remark}

Now, we explain how proposition \ref{propABCDW} could be proved in the volume pre\-ser\-ving case with the same arguments in the dissipative case. First, we will prove a conservative version of the Proposition 2.3 in \cite{ABCDW}.

\begin{proposition}
There is a residual subset $\SR_2$ of $\dm$ consisting of diffeomorphisms $f$ such that $Per_{\R}(H(p,f))$, the set of hyperbolic periodic points of $f$ with the same index of $p$ and with only real eigenvalues of multiplicity one, is dense in $H(p,f)$ for every non-trivial homoclinic class $H(p,f)$ of $f$.
\label{proABCDW}\end{proposition}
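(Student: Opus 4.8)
The plan is to adapt the proof of \cite[Proposition~2.3]{ABCDW} to the volume-preserving category, replacing the dissipative perturbation tools by the conservative Franks Lemma~\ref{l.franks}. The first step is a standard Baire-category reduction. The property ``having a hyperbolic periodic point with real simple spectrum, of a prescribed index, homoclinically related to a prescribed periodic orbit'' is $C^1$-open, and the homoclinic class depends lower-semicontinuously on the diffeomorphism (any compact piece of $W^s(\mathcal{O}_p)\pitchfork W^u(\mathcal{O}_p)$ persists under a $C^1$-perturbation). Organising all periodic orbits by their continuations over a countable family of open subsets of $\dm$, one reduces the proposition to the following dense perturbation statement $(\star)$: for every $f\in\dm$, every hyperbolic periodic point $p$ of $f$ with non-trivial homoclinic class, every $z\in H(p,f)$ and every $\eps>0$, there are $g\in\dm$ with $d_{C^1}(f,g)<\eps$ and a hyperbolic periodic point $q$ of $g$ with $d(q,z)<\eps$, homoclinically related to the continuation $p_g$ (so that $q\in H(p_g,g)$ and $\indi(q)=\indi(p)$), whose derivative $Dg^{\tau(q)}(q)$ has only real eigenvalues, all simple.

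To prove $(\star)$ I would first locate a good periodic orbit. Since $H(p,f)=\overline{W^s(\mathcal{O}_p)\pitchfork W^u(\mathcal{O}_p)}$ and the class is non-trivial, pick a transverse homoclinic point $x$ of $\mathcal{O}_p$ with $d(x,z)<\eps/2$. By the Birkhoff--Smale theorem there is a horseshoe $\Gamma$ containing $\mathcal{O}_p$ and $x$, all of whose periodic orbits are homoclinically related to $p$ and hence have index $\indi(p)$, and which contains periodic orbits of arbitrarily large period passing within $\eps/2$ of $x$. Shrinking a neighbourhood of $\Gamma$ so that it misses $\mathcal{O}_p$, fix such a periodic point $q_0$ of (large) period $\tau$.

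Then I would adjust the spectrum of $A:=Df^{\tau}(q_0)$ by a Franks perturbation along $\mathcal{O}(q_0)$. The only obstructions to $A$ having real simple spectrum are non-real eigenvalue pairs and equalities of moduli. A non-real pair spans an $A$-invariant $2$-plane which propagates to a step-wise invariant $2$-plane bundle along the orbit; inserting into this bundle, at each of the $\tau$ steps, a rotation through a common small angle --- an element of $SL(d)$, so the volume constraint is respected --- one can make the pair real, the accumulated rotation being of order one while each one-step perturbation is of order $1/\tau$, hence bounded by the constant $\delta$ of Lemma~\ref{l.franks} once $\tau$ is large. A further perturbation of the same type, arbitrarily small, separates coincident moduli. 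None of these perturbations moves an eigenvalue across the unit circle, so $\indi$ is preserved; all are supported in a small neighbourhood of $\mathcal{O}(q_0)$ disjoint from $\mathcal{O}_p$, so the robust transverse intersections realising the homoclinic relation between $q_0$ and $\mathcal{O}_p$ inside $\Gamma$ survive. Lemma~\ref{l.franks} then produces $g$ together with the required periodic point $q$ (the continuation of $q_0$ in the persisting horseshoe), which proves $(\star)$ and, via the Baire reduction, the proposition.

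The main obstacle is this last perturbation. A single periodic point whose derivative carries a ``fast'' complex pair cannot be made hyperbolic with real spectrum by any $C^1$-small perturbation, so one must genuinely exploit that $\Gamma$ carries periodic orbits of unbounded period in order to dilute the compensating rotation, and one needs a step-wise invariant $2$-plane in which to place those rotations while staying in $SL(d)$ --- obtained, where necessary, after a preliminary small perturbation separating the modulus of the pair from the remaining ones and passing to the corresponding spectral sub-bundle. Carrying this out so that every perturbation is simultaneously $\delta$-small, index-preserving, localised, and volume-preserving, and iterating it over all the complex pairs, is the technical heart of the argument; it is precisely the conservative counterpart of the elementary perturbation lemma behind \cite[Proposition~2.3]{ABCDW}. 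The Baire reduction of the first step, though routine, also needs the usual care in the countable parametrisation of continuations.
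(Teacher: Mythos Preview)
Your overall strategy --- a Baire reduction to a dense perturbation statement, followed by the conservative Franks lemma to realise a linear-algebraic perturbation of the cocycle along a periodic orbit --- coincides with the paper's, and both ultimately defer the proof to \cite[Proposition~2.3]{ABCDW}. The difference lies in how the linear step is packaged.

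The paper does not attempt to perturb the derivative along a single long orbit directly. Instead it invokes the abstract framework of \emph{periodic linear systems with transitions} from \cite{BDP}: Lemma~1.9 there says that $Df$ over $Per_h(H(p,f))$ carries this structure, and a separate lemma (again from \cite{BDP}) asserts that any such system admits, on a dense set of base points, an $\eps$-perturbation which is diagonalisable with positive real simple spectrum. The conservative constraint is then handled by citing Remark~7.2 of \cite{BDP}, which guarantees the perturbed cocycle can be taken with determinant one at each step; Lemma~\ref{l.franks} does the rest. The transitions are what make the diagonalisation cheap: one builds a \emph{new} periodic orbit by concatenating many copies of a given one, inserting at each junction a transition map that is nearly arbitrary, and it is in these transition slots --- not distributed along the orbit --- that the required rotation is placed.

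Your sketch instead fixes one long orbit $q_0$ and proposes to insert a rotation of angle $\theta/\tau$ at each of the $\tau$ steps inside the propagated $2$-plane bundle. As written this step is problematic: the step maps $M_j=Df|_{V_j}$ need not be conformal, so the inserted rotations do not commute through the cocycle and the accumulated effect on the argument of the eigenvalue pair is not simply $\tau\cdot(\theta/\tau)$. If one conjugates the rotation to each fibre via $M_{j-1}\cdots M_0$ the composition telescopes correctly, but then the size of the $j$-th perturbation is governed by the condition number of $M_{j-1}\cdots M_0|_{V_0}$, which is not a priori bounded. This is exactly the difficulty the transition mechanism in \cite{BDP} is designed to bypass. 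You do flag this as ``the technical heart'' and point back to \cite{ABCDW}, so the proposal is not wrong so much as misdescribing the mechanism; but if you intend to actually carry it out, you should replace the rotation-distribution picture by the transition argument (or, equivalently, by choosing $q_0$ to shadow a shorter orbit many times so that the complex arguments are already close to $0$ modulo $\pi$ before any perturbation).
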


We recall that a {\it periodic linear systems (cocycles)} is a 4-tuple $\mathcal{P}=(\Sigma, f, \mathcal{E},A)$, where $f$ is a diffeomorphism, $\Sigma$ is an infinite set of periodic points of $f$, $\mathcal{E}$ an Euclidian vector bundle defined over $\Sigma$, and $A\in GL(\Sigma, f,\mathcal{E})$ is such that $A(x):\mathcal{E}_x\rightarrow \mathcal{E}_{f(x)}$ is a linear isomorphism for each $x$ ($\mathcal{E}_x$ is the fiber of $\mathcal{E}$ at $x$). For the precise definition we refer the reader to the work of  Bonatti-Diaz-Pujals \cite{BDP}.

\begin{lemma}[Lemma 1.9 in \cite{BDP}] Let $H(p,f)$ be a non-trivial homoclinic class. Then the derivative $Df$ of $f$ induces a periodic linear system with transitions over $Per_h(H(p,f))$, the set of hyperbolic periodic points homoclinically related with $p$.
\end{lemma}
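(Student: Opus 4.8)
Since the precise definition of a \emph{periodic linear system with transitions} is the one recalled above from \cite{BDP}, the plan is to follow the proof of \cite[Lemma 1.9]{BDP} and to check that every ingredient it uses survives verbatim in $\dm$. First, the linear system itself: because $H(p,f)$ is non-trivial, $p$ is a hyperbolic saddle and, by the Birkhoff--Smale theorem, there are hyperbolic periodic orbits of arbitrarily large period shadowing transverse homoclinic loops of $p$, all homoclinically related to $p$; hence $\Sigma:=Per_h(H(p,f))$ is an infinite set of periodic points. Putting $\mathcal{E}:=TM|_{\Sigma}$ and $A:=Df|_{\Sigma}$, the tuple $(\Sigma,f,\mathcal{E},A)$ is a periodic linear system. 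A dimension count on the transverse heteroclinic intersections also shows that every point of $\Sigma$ has the same index as $p$, which is what makes the transitions well posed.

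Next I construct the transitions. Given $x,y\in\Sigma$, both are homoclinically related to $p$, hence to each other, since homoclinic relatedness is transitive (apply the inclination lemma to the transverse intersections $W^u(\mathcal{O}(x))\cap W^s(\mathcal{O}(p))$ and $W^u(\mathcal{O}(p))\cap W^s(\mathcal{O}(y))$, using that all the indices agree). Fix transverse heteroclinic orbits joining $\mathcal{O}(x)$ to $\mathcal{O}(y)$ and $\mathcal{O}(y)$ to $\mathcal{O}(x)$. As in \cite{BDP}, a transition $\mathcal{E}_x\to\mathcal{E}_y$ is manufactured from the derivative cocycle $Df^{n}$ along such a heteroclinic orbit segment: reading $Df^{n}$ in local trivializations of $TM$ adapted to the hyperbolic splittings over $\mathcal{O}(x)$ and over $\mathcal{O}(y)$, the inclination lemma shows that, as the segment is lengthened --- allowing also an arbitrary number of extra turns around $\mathcal{O}(x)$ at the start and $\mathcal{O}(y)$ at the end --- one obtains linear isomorphisms $\mathcal{E}_x\to\mathcal{E}_y$ whose norms and inverse norms are bounded by a constant $K_0$ depending only on $f$ and on the finitely many chosen heteroclinic orbits. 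These maps, together with $\mathrm{id}\in T_{x,x}$, form the families $T_{x,y}$ of transitions.

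It then remains to verify the axioms, which goes exactly as in \cite{BDP}: the uniform bound $\max(\|t\|,\|t^{-1}\|)\le K_0$ for $t\in T_{x,y}$; closure of $T_{x,y}$ under left and right composition with the return cocycles $A^{\tau(y)}(y)$ and $A^{\tau(x)}(x)$; and the concatenation rule, namely that for $t_1\in T_{x,y}$ and $t_2\in T_{y,z}$ one has $t_2\circ A^{n\tau(y)}(y)\circ t_1\in T_{x,z}$ for all large $n$. The last property is again an instance of the inclination lemma, applied to the orbit that runs near $\mathcal{O}(x)$, follows the first heteroclinic connection to near $\mathcal{O}(y)$, turns $n$ times around $\mathcal{O}(y)$, and then follows the second connection to near $\mathcal{O}(z)$.

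Finally, the volume-preserving case adds nothing conceptual: transverse heteroclinic intersections, the inclination lemma, and local trivializations of $TM$ are purely $C^1$ notions and do not see the invariant measure $m$. The one new feature is that each $Df_x$, and hence each transition $t\in T_{x,y}$ --- being a limit of compositions of such derivatives read in orthonormal frames --- has determinant of modulus one with respect to the Riemannian volume elements, reflecting $f\in\dm$; so the periodic linear system induced by $Df$ over $Per_h(H(p,f))$ is volume-preserving and has transitions, which is the assertion. I expect the genuinely delicate point to be precisely the one it already is in \cite{BDP}: keeping the inclination-lemma estimates uniform as arbitrarily many turns around $\mathcal{O}(x)$ and $\mathcal{O}(y)$ are inserted into the transition orbits, so that the constant $K_0$ is independent of the number of turns; conservativeness contributes no extra obstacle to this.
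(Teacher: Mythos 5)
Your proposal is correct and matches what the paper does: the statement is exactly Lemma 1.9 of \cite{BDP}, which the paper cites without proof precisely because it is a perturbation-free assertion about an arbitrary $C^1$ diffeomorphism with a non-trivial homoclinic class, so it applies verbatim to $f\in\dm$; your sketch faithfully reproduces the \cite{BDP} construction of transitions via the inclination lemma along transverse heteroclinic orbits. Your closing observation that volume preservation only becomes relevant when one later perturbs the linear system (where the determinant-one normalization of Remark 7.2 in \cite{BDP} and the conservative Franks lemma are needed) is exactly the point the paper makes after stating the lemma.
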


Using this lemma and Franks lemma \ref{l.franks}, our problem in Proposition \ref{proABCDW} becomes a problem of linear algebra. We say that a periodic linear system with transitions $\mathcal{P}=(\Sigma,\, f,\,\mathcal{E},\, A)$ is {\it diagonalizable} at the point $x\in \Sigma$ if the linear map
$$
M_A(x):\mathcal{E}_x\rightarrow \mathcal{E}_x,\quad M_A(x)=A(f^{\tau(x)-1}(x))\circ\ldots\circ A(f^2(x))\circ A(x),
$$
only has positive real eigenvalues of multiplicity one.

\begin{lemma} For every periodic linear system with transitions $\mathcal{P}=(\Sigma,\, f,\,\mathcal{E},\, A)$ and every $\eps>0$ there is a dense subset $\Sigma'$ of $\Sigma$ and an $\eps-$perturbation $A'$ of $A$ defined on $\Sigma'$ which is diagonalizable, that is, $M_{A'}(x)$ has positive real eigenvalues of multiplicity one for every $x\in \Sigma'$.
\end{lemma}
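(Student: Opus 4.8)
This is the conservative version of the linear-algebra lemma used in the proof of Proposition 2.3 of \cite{ABCDW}, itself a variant of the results of Bonatti-Diaz-Pujals \cite{BDP}, and the plan is to run the same argument, the only new requirement being that the perturbation $A'$ stay inside the conservative linear systems (those with $\abs{\det A(x)}=1$ at every $x$). This costs nothing here: in the situation of Proposition \ref{proABCDW} the periodic linear system and its transitions are induced by $Df$ along periodic and connecting orbits of a volume preserving diffeomorphism, so the transitions are already volume preserving, and all the perturbations below are produced by rotations and by conjugacies, which are conservative. The dense set $\Sigma'$ will be obtained together with $A'$: by the transition property, for every $x\in\Sigma$ and every large $n$ there are genuine periodic points $z$ of the system whose return map $M_A(z)$ is conjugate to an arbitrarily $C^0$-small perturbation of a prescribed word $t_{y_k,y_1}\circ M_A(y_k)^{n_k}\circ t_{y_{k-1},y_k}\circ\cdots\circ M_A(y_1)^{n_1}$ formed from points $y_i\in\Sigma$ and the transitions, and such $z$ may in addition be taken arbitrarily close to any given point of $\Sigma$, which will give density of $\Sigma'$.

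Fix $x\in\Sigma$ and set $B=M_A(x)$. The obstructions to diagonalizability with positive real eigenvalues of multiplicity one are of three types: non-real eigenvalues, negative real eigenvalues, and eigenvalues of equal modulus (in particular nontrivial Jordan blocks). The last two I would treat at the end and are routine: passing to an even power turns a negative real eigenvalue $-\mu$ into the positive eigenvalue $\mu^{2}$, an arbitrarily small conservative perturbation splits a repeated real eigenvalue or a Jordan block into distinct real eigenvalues of the same sign, and a final generic $\eps$-small conservative perturbation makes the (by then positive real) eigenvalues pairwise distinct and different from $1$. So it suffices to arrange that the power used is even and to remove the non-real eigenvalues.

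Removing the non-real eigenvalues is the heart of the matter and the step I expect to be the main obstacle. The idea is not to perturb $B$ directly but to use the freedom in the number of loops. After one preliminary $\eps$-small conservative perturbation we may assume that the arguments $\theta_1,\dots,\theta_r$ of the non-real eigenvalues of $B$ have $\theta_1/2\pi,\dots,\theta_r/2\pi$ rationally independent (and irrational); by Kronecker's theorem there are then arbitrarily large even $n$ with $n\theta_1,\dots,n\theta_r$ all within $\eta$ of $0 \pmod{2\pi}$, where $\eta$ can be taken as small as we wish, in particular $\eta<\eps$. For such an $n$ the matrix $B^{n}$ has every non-real eigenvalue within distance of order $\eta$ of the positive real axis, hence lies within $\eta$ of a matrix all of whose eigenvalues are positive real. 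Realizing $B^{n}$ as the return map $M_A(z)$ of a long periodic orbit $z$ as recalled above, and then moving that return map within its $\eps$-neighbourhood onto a conjugate of such a matrix — legitimate since $\eta<\eps$ — yields the required conservative $\eps$-perturbation along the orbit of $z$; doing this for every $x\in\Sigma$, with the orbits $z$ accumulating on $\Sigma$, produces the dense set $\Sigma'$ and the perturbation $A'$. The delicate points, exactly those handled in \cite{BDP,ABCDW} in the dissipative case, are: arranging rational independence of all the arguments by the single preliminary perturbation so that Kronecker applies simultaneously to every block; checking that the transition property really allows the return map of $z$ to be moved freely throughout an $\eps$-ball around $B^{n}$; and the bookkeeping that keeps the preliminary perturbation, the $\eta$-correction, and the final genericity perturbation each below $\eps$ and inside the conservative class.
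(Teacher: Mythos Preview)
The paper does not prove this lemma: it is quoted from \cite{BDP}, and the paper's only addition is to record (via Remark~7.2 of \cite{BDP}) that the perturbation $A'$ may be taken with $\det A'(x)=1$, which is exactly the observation you make about conservativity costing nothing. So there is no argument in the paper to compare your sketch against beyond that citation.

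Your sketch of the underlying argument has a real gap, however. You write ``realizing $B^{n}$ as the return map $M_A(z)$'', but the transition property does not give $M_A(z)$ within $\eps$ of $B^n$: it gives $M_A(z)$ close (up to conjugacy) to a word of the form $t_{x,x}\cdot B^n$, where the transition $t_{x,x}$ is a bounded but \emph{not small} product of linear maps along the closing segment. Your Kronecker step arranges that on each complex $2$-plane $B^n$ is close to a positive scalar multiple of the identity; on that plane $t_{x,x}\cdot B^n$ is then close to a scalar multiple of $t_{x,x}$, and if $t_{x,x}$ acts there with a nonzero rotation the product still has complex eigenvalues, so your $\eta$-correction cannot repair this. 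The argument in \cite{BDP} handles the interaction between the transitions and the high powers with genuine additional work, and that is the substantive part of the proof, not bookkeeping. You do flag the transition step as delicate, but your phrasing (``allows the return map of $z$ to be moved freely throughout an $\eps$-ball around $B^{n}$'') presupposes that the return map is already near $B^n$, which is precisely what fails.
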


By Remark 7.2 in \cite{BDP} we can consider the perturbation $A'$ such that $det A'(x)=1$ for every $x\in\Sigma'$.  Then, as we have noted before, we can use Franks Lemma \ref{l.franks} and the previous lemmas to show  proposition \ref{proABCDW}. In fact, after we have done all these observations the proof is identically the proof of proposition 2.3 in \cite{ABCDW}.

Hence, we can suppose $f\in \mathcal{R}_2\cap \mathcal{R}$, where the residual set $\SR$ is given by Proposition \ref{BC}, and then we may assume that $p$ and $q$ have only real eigenvalues of multiplicity one for $Df^{\tau(p)}(p)$ and $Df^{\tau(q)}(q)$, respectively.

As we did before we can suppose, unless some perturbation, that $f$ exhibits a heterodimensional cycle between $p$ and $q$. Hence the proof of the Proposition \ref{propABCDW} follows direct from the next result stated in \cite{ABCDW}.

\begin{proposition}[Theorem 3.2 in \cite{ABCDW}] Let $f$ be a diffeomorphism having a heterodimensional cycle associated to periodic saddles $p$ and $q$, of indices $i$ and $i+j$ with $j>0$, with real eigenvalues. Then, for any $C^1-$neighborhood $\mathcal{U}$ of $f$ and for any integer $\alpha$ with $i\leq \alpha \leq i+j$, there exists $g\in \mathcal{U}$ having a periodic point with index $\alpha$.
\end{proposition}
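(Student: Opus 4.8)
We outline the argument, following \cite{ABCDW}. The plan is to produce a hyperbolic periodic point of index $\alpha$ by constructing, for suitable large integers $m$ and $n$, a periodic orbit that makes about $m$ turns near the orbit of $p$ and about $n$ turns near the orbit of $q$, and then checking that as $m$ and $n$ vary the index of this orbit runs through every value between $i$ and $i+j$. Since (after the normalization carried out above) $Df^{\tau(p)}(p)$ and $Df^{\tau(q)}(q)$ have only real eigenvalues of multiplicity one, the first step is to apply Franks Lemma \ref{l.franks} to linearize $f$ in small neighborhoods of the orbits of $p$ and of $q$ and to place the two legs of the cycle in a convenient position: one leg, $W^u(p)\cap W^s(q)$, is a transverse intersection (possible because $j>0$), while the other leg, $W^s(p)\cap W^u(q)$, is quasi-transverse, and along its orbit one records the associated transition map.

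Next I would use a connecting/closing argument --- concretely the Xia-type Lemma \ref{xia lemma} together with the $\lambda$-lemma, exactly as in the proof of Proposition \ref{p.index} --- to realize, for each large pair $(m,n)$, the natural periodic pseudo-orbit ($m$ turns around $p$, the $W^u(p)\cap W^s(q)$ connection, $n$ turns around $q$, the $W^u(q)\cap W^s(p)$ connection) as a genuine hyperbolic periodic orbit $p_{m,n}$ of a diffeomorphism $g_{m,n}\in\mathcal{U}$. Based at a point near $p$, the derivative of $g_{m,n}$ over one period of $p_{m,n}$ is $T_{2}\,A_{q}^{\,n}\,T_{1}\,A_{p}^{\,m}$ up to bounded factors independent of $m$ and $n$, where $A_p=Dg^{\tau(p)}(p)$, $A_q=Dg^{\tau(q)}(q)$, and $T_1,T_2$ are the transitions along the two heteroclinic legs.

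The heart of the matter is the linear-algebraic analysis of this product. The $i$ most contracting directions of $A_p^{\,m}$ remain uniformly contracting and the $d-i-j$ most expanding directions of $A_q^{\,n}$ remain uniformly expanding once $m,n$ are large, since these blocks exponentially dominate the bounded transitions; this pins down $i$ of the Lyapunov exponents of $p_{m,n}$ as negative and $d-i-j$ of them as positive, and leaves a $j$-dimensional central block. For $g$ close to $f$ and the cycle in normalized position this central cocycle is, in adapted coordinates, essentially diagonal with real entries --- this is exactly where the real, simple eigenvalue hypothesis is used --- and its $j$ eigenvalues are expanding near $p$ and contracting near $q$. Fixing $n$ very large and letting the integer $m$ decrease from a large value down to a small one, the number of contracting central eigenvalues of $T_2 A_q^n T_1 A_p^m$ equals $j$ when $n$ dominates, equals $0$ when $m$ dominates, and changes by at most one at each step, because each central eigenvalue varies monotonically with $m$ and, being real and simple, can be tracked individually. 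Hence some choice $m=m(n)$ yields $\indi(p_{m,n})=\alpha$, and a final small Franks-type perturbation restores hyperbolicity should an exponent land exactly at $0$.

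The step I expect to be the main obstacle is the control of the $j$-dimensional central block through the non-transverse leg: because $W^s(p)\cap W^u(q)$ is only quasi-transverse, turning the pseudo-orbit into an actual orbit costs a perturbation whose effect on the central cocycle must be kept small and, above all, must not destroy its monotone dependence on $m$. This is precisely why one first normalizes so that all the relevant eigenvalues are real and simple (so that the central cocycle conjugates to a diagonal one whose eigenvalues can be followed one at a time) and why one exploits the discreteness of $m$ (so that a genuine discrete intermediate-value argument applies). With these ingredients in place the conclusion is Theorem 3.2 of \cite{ABCDW}, to which we refer for the details.
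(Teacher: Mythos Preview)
Your outline correctly captures the scheme of Theorem~3.2 in \cite{ABCDW}, and that is exactly what the paper does here: it does not reprove the proposition but simply quotes \cite{ABCDW} and observes that the only perturbation tools used in that proof are Franks' lemma and Hayashi's connecting lemma, both of which are available in the volume-preserving category (Lemma~\ref{l.franks} and \cite{BONATTICROVISIER}), so the dissipative argument transfers verbatim. In that sense you have written out more than the paper itself supplies.

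One small caution on your intermediate-value step: the assertion that ``the index changes by at most one when $m$ moves by one, because each central eigenvalue varies monotonically and can be tracked individually'' is the crux of the \cite{ABCDW} argument, and it is not automatic from the product form $T_2 A_q^{\,n} T_1 A_p^{\,m}$ alone. It requires a preliminary Franks perturbation putting the transitions $T_1,T_2$ into adapted coordinates in which the $j$-dimensional central block genuinely decouples into one-dimensional invariant pieces; this is precisely where the real/simple eigenvalue hypothesis is consumed. If you wanted a self-contained conservative statement rather than a citation, this is the place where you would have to verify that those normalizing perturbations can be kept in $SL(d)$ --- which they can, by the same determinant-correction trick used in the appendix, but it is worth flagging as the one point where ``the same proof works'' is not entirely tautological.
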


Although this proposition has been stated there for dissipative diffeomorphisms, all of the used perturbations of $f$ are applications of Franks lemma and Hayashi's connecting lemma. Then, this proposition is still true in the volume preserving case since these two perturbation tools are available in the conservative setting.

\section{Proof of Theorem A}

In the sequence we shall prove the hyperbolicity of $\ov{Per(f)}$ for every $f\in\gm$, since we already know that the indices of hyperbolic periodic points is  constant, say $s$.

Let us fixe $f\in \gm$ and a continuous dominated splitting $T_{\ov{Per(f)}}M=E\oplus F$ given by the Proposition \ref{DS prop}. From now, we also consider $m\in\N$, $0<\lambda<1$ and $K>0$ as in the Proposition \ref{DS prop}. We will prove that this splitting is hyperbolic. We will follow here similar arguments as in the proof of Mane of Theorem B in \cite{Mane}.

To show this we need prove that we have contraction and expansion in the sub-bundles $E$ and $F$, respectively, unless a certain finite time iterate. Hence, by compactness of $\ov{Per(f)}$, we just need to show the following
\begin{equation}
\liminf_{n\rightarrow +\infty} \|Df^n(x)|E(x)\|=0
\label{eq 1}\end{equation}
and
$$
\liminf_{n\rightarrow +\infty} \|Df^{-n}(x)|F(x)\|=0,
$$
for all $x\in \ov{Per(f)}$.

Observe it's enough to prove the first case since the second one can be deduced from the first one replacing $f$ by $f^{-1}$.

Suppose now $(\ref{eq 1})$ is not true. Then there exists $x\in M$ such that
$$
\|Df^{jm}(x)|F(x)\|\geq c>0, \text{ for all } j>0.
$$
Defining the following probability measure $$\mu_j=\frac{1}{j}\sum_{i=0}^{j-1}\delta_{f^{mi}(x)},$$
where $\delta$ is the dirac measure, we can find a subsequence $j_n\rightarrow \infty$ such that $\mu_{j_n}$ converges  to an $f^m-$invariant probability measure $\mu$ in the weak$^*$ topology and
\begin{equation}
\lim_{n\rightarrow +\infty} \frac{1}{j_n}\log \|Df^{mj_n}(x)|E(x)\|\geq 0.
\end{equation}

Hence, taking the continuous functional  $\phi(x)=\log\|Df^{m}(x)|E(x)\|$ over $\ov{Per(f)}$, we obtain:
\begin{align*}
\int_{\ov{Per(f)}}\phi \; d\mu &= \lim_{n\rightarrow +\infty}\frac{1}{j_n}\sum_{i=0}^{j_n-1}\log\|Df^{m}(f^{mi}(x))|E(f^{mi}(x))\|
\\
&\geq \lim_{n\rightarrow +\infty}\frac{1}{j_n}\log\|Df^{mj_n}(x)|E(x)\|\geq 0.
\end{align*}

And so, using Ergodic Birkhoff's Theorem
\begin{equation}
0\leq \int_{\ov{Per(f)}}\phi\;d\mu=\int_{\ov{Per(f)}}\lim_{n\rightarrow +\infty}\frac{1}{n}\sum_{i=0}^{n-1}\log\|Df^{m}(f^{mi}(x))|E(f^{mi}(x))\|\; d\mu.
\label{eq 2}\end{equation}

Now let $\Sigma(f)\subset M$ a total probability set given by the Ergodic closing lemma in the volume preserving case, see \cite{Arnaud}. Hence, denote by $\nu=\frac{1}{m}\sum_{i=0}^{m-1} {f^i}^*\mu$ the $f-$invariant probability measure induced by $\mu$, we have $\nu(\Sigma(f)\cap \ov{Per(f)})=1$ since $\nu$ is supported on $\ov{Per(f)}$. But now, by the invariance of $\Sigma(f)\cap \ov{Per(f)}$ for $f$, it's easy to see that this is also a total probability set for $\mu$. And so, this together with $(\ref{eq 2})$ imply  the existence of a point $y\in\Sigma(f)\cap\ov{Per(f)}$ such that:
\begin{equation}
\lim_{n\rightarrow +\infty}\frac{1}{n}\sum_{i=0}^{n-1}\log\|Df^{m}(f^{mi}(y))|E(f^{mi}(y))\|\geq 0.
\label{eq 3}\end{equation}

Observe that part (c) of Proposition \ref{DS prop} is an obstruction for $y$ be periodic. Hence $y\not\in Per(f)$.

By (\ref{eq 3}), we can take $\lambda<\lambda_0<1$ and $n_0>0$ such that:
\begin{equation}
\frac{1}{n}\sum_{i=0}^{n-1}\log\|Df^{m}(f^{mi}(y))|E(f^{mi}(y))\|\geq \log \lambda_0,
\label{eq 4}\end{equation}
when $n\geq n_0$.

In the next step we will find a hyperbolic periodic point $p\in Per(g)$ such that its orbit is ``close" to the orbit of $y$, for $g$ near $f$, and then we will use Lemma \ref{l.franks} to exchange the derivative at the orbit of $p$, such that the inequality (\ref{eq 4}) gives us a contradiction with part $(a)$ of proposition \ref{DS prop}.

Using that $y\in \Sigma(f)$ we can approximate $f$ by diffeomorphisms $g$ such that there exists $p\in Per(g)$ and the distance between $f^j(p)$ and $f^{j}(y)$ is arbitrary small, for $0\leq j\leq n$, where $n$ is the minimum period of $p_g$. Since $y$ is not periodic the period $n$ must goes to infinity when $g$ approaches $f$. Hence we may choose $g$ and $p$ such that:
\begin{equation}
n\geq m,
\end{equation}
\begin{equation}
k\geq n_0,
\end{equation}
\begin{equation}
K\lambda^k<\lambda_0^{k}
\end{equation}
and
\begin{equation}
\left(\frac{\lambda}{\lambda_0}\right)^k C^m\leq \frac{1}{2},
\end{equation}
where $k=[n/m]$ and $C=\sup_{x\in M}\|Df^{-1}(x)\|$.

These choices together with $(\ref{eq 4})$ and the dominated splitting of $f|\ov{Per(f)}$ give us the following
\begin{align}
\|Df^{-n}_{f^n(y)}|F(f^n(y))\| &\leq \prod_{i=0}^{k-1}\|Df^{-m}_{f^{n-mi}(y)}|F(f^{n-mi}(y))\|\; \|Df^{-(n-mk)}_{f^{n-mk}(y)}|F(f^{n-mk}(y))\|
\nonumber\\
&\leq \lambda^k\, C^m\, \lambda_0^{-k}\leq \frac{1}{2}.
\label{eq 5}\end{align}

Let $U$ be a neighborhood of $\ov{Per(f)}$ small enough such that the maximal set in $U$ $$\Lambda_U(f)=\bigcap_{n\in\Z} f^n(U)$$  has a dominated splitting and satisfying the thesis of the Proposition \ref{DS prop}.  Hence, we can chose $\mathcal{U}\subset \gm$ a neighborhood of $f$ such that every $h\in\mathcal{U}$ has a dominated splitting in $\Lambda_U(h)$  near of the dominated splitting in $\Lambda_U(f)$.
Observe that $E_g(p)=E^s_g(p)$ and $F_g(p)=E^u_g(p)$ since dominated splitting is unique if we fix the dimensions, and the index of periodic points is constant for $g\in\mathcal{U}$. Hence, taking a smaller neighborhood $\mathcal{U}$ if necessary we can suppose $E_g^s(g^i(p))$ and $E^u_g(g^i(p))$ as near as we want of $E(f^i(y))$ and $F(f^i(y))$, $0\leq i \leq n$, respectively.

In the sequence we will build some volume preserving isomorphisms $A_i:T_{g^i(p)}M\rightarrow T_{f^{i}(y)}M$, $0\leq i\leq n$, near of identity in local coordinates. Moreover, for future convenience, we will have $A_i(E^s_g(g^i(p)))=E(f^i(y))$ isometrically and $A_i(E^u_g(g^i(p)))=F(f^i(y))$, $0\leq i\leq n$.

We show how to construct $A_0$, the other cases are analogous. We choose
$$
\{e_1(i),\,\ldots,\,e_s(i),\, r_1(i),\,\ldots,\,r_{d-s}(i)\} \,\text{ a basis of } T_i M,\, i=y,p,
$$
such that $\{e_j(i),\, 1\leq j\leq s\}$ is an orthonormal basis for $E(y)$ if $i=y$ or for $E^s_g(p)$ if $i=p$, and $\{r_j(i),\, 1\leq j\leq d-s\}$ is an orthonormal basis for $F(y)$ if $i=y$ or for $E^u_g(p)$ if $i=p$.

Let $A_0: T_p M\rightarrow T_yM$ be a linear map satisfying $A_0(e_j(p))=e_j(y)$ and $A_0(r_j(p))=r_j(y)$, $1\leq j\leq d$. Therefore, by construction, $A_0$ is a volume preserving linear map.

Now, let us back to the proof. Let $L_i:T_{g^i(p)} M\rightarrow T_{g^{i+1}(p)} M$ be volume preserving maps defined as follows
$$
L_i= A^{-1}_{i+1}\, Df(f^i(y))\, A_i, \, \text{ for } 0\leq i\leq n-1.
$$

Hence, taking $n$ large enough if necessary, $L_i$ is as close of $Dg(g^i(p))$ as we want, for all $0\leq i\leq n-1$. Then, using lemma \ref{l.franks}, we can find $h\in \mathcal{U}$ such that $p\in Per(h)$ and $Dh(h^i(p))=L_i$, $0\leq i\leq d-1$. Observe that $E^s_g(p)$ and $E^u_g(p)$ still are invariants by $Dh^n(p)$, by construction of $L_i'$s. This together with (\ref{eq 5}), the proximity of $f$ and $g$, and the dimension of the subspaces give us that $E^u_h(p)=E^u_g(p)$. And so, $E^s_h(p)=E^s_g(p)$ too.

Finally, since $A_i|E^s_g(g^i(p))$ is an isometry, we have the following
$$
\|Dh^{m}(h^{im}(p))| E^s_h(h^{im}(p))\|=\|Df^{m}(f^{im}(y))| E(f^{im}(y))\|, \, \text{ for all }\, i\in\N.
$$
Therefore,
$$
\prod^{k-1}_{i=0} \|Dh^{m}(h^{im}(p))| E^s_h(h^{im}(p))\|= \prod^{k-1}_{i=0}\|Df^{m}(f^{im}(y))| E(f^{im}(y))\|\geq \lambda_{0}^{k},
$$
what contradicts Proposition \ref{DS prop}. Therefore we showed that $\ov{Per(f)}$ is hyperbolic if $f\in \gm$.


Finally to conclude that if $f\in \gm$ then $f$ is Anosov, we just need to show that $\Omega(f)=\ov{Per(f)}$ since $\Omega(f)=M$, by  Poincar\'e's Recurrence Theorem. This will be a consequence of Pugh's closing lemma.

If $f\in\gm$ then there exists some neighborhood $\mathcal{U}$ of $f$ in $\dm$ such that $\sharp H_n(g)$, the number of hyperbolic periodic points of $g$ with period smaller or equal than $n$, is finite and equal for every $g\in\mathcal{U}$, since all diffeomorphisms in $\mathcal{U}$ has only hyperbolic periodic points.

Now, suppose $\ov{Per(f)}\subsetneq \Omega(f)$, and let $x\in\Omega(f)\backslash \ov{Per(f)}$. By Pugh's closing lemma we can fix $k\in \N$ such that all of the perturbations of $f$, needed to create a hyperbolic periodic point near of $x$, are done in an arbitrary small neighborhood of $$\bigcup_{-k\leq j\leq k} f^j(x).$$

Thus, let $U$ be a neighborhood of $\ov{Per(f)}$ such that $ f^j(x)\not \in \overline{U}$, $-k\leq j\leq k$. So, using the closing lemma we can get $g\in \mathcal{U}$ and $p\in Per(g)$ with $p\not\in \overline{U}$. However by choice of $k$ and $U$, $f$ is equal to $g$ in $U$ and since $p\not \in \overline{U}$ we have $H_n(f)\neq H_n(g)$ for some $n\in \N$, what contradicts the fact of $g\in\mathcal{U}$. Therefore, we have $\Omega(f)=\ov{Per(f)}$ and this completes the proof.

\section{Palis Conjecture in the volume preserving scenario}

In this section, we elaborate the arguments due to Crovisier in \cite{CROVISIER} on the Palis conjecture in the volume preserving scenario, and this gives the proof of corollary \ref{palis conj}. As we said in the introduction we only need to prove that if the dimension of $M$ is greater than two.

Suppose $f\in\dm$ is not Anosov then $f\not\in \gm$. Therefore by theorem A, there exists $g\in \dm$ close to $f$ with a non hyperbolic periodic point $p$. Thus, using Franks lemma \ref{l.franks}, we can bifurcate this periodic point and produce two hyperbolic periodic points $q$ and $r$ with indices $i$ and $i+1$ respectively.

Thus, as we did before in the proof of Proposition \ref{p.index}, we can perturb once more and create a heterodimensional cycle between $q$ and $r$.

For more details we refer the reader to \cite{Crovisier2}.

\section{Star Flows}

In this section we observe how we can extend some results in the theory of star flows to a divergence free context. Let $\vf$ be the set of vector fields on $M$ endowed with the $C^1$ topology. And $\vm\subset \vf$ the subspace of vector fields which are divergence free. By Liouville's formula, we know that the flow generated by a divergence free vector field is volume preserving, so they are also called incompressible flows in the literature.

The analogous version of $\gm$ to the flow case are called incompressible star flows. We say that $X$ generates an incompressible star flow if there exits a neighborhood $\SU$ of $X$ in $\vm$ such that if $Y\in \vm$ then all of its singularities and periodic orbits are hyperbolic. We denote the set of incompressible star flows by $\sm$.

The analogous result by Gan and Wen in \cite{GW} should be true for incompressible star flows.

\begin{theorem}
If $X\in\sm$ has no singularities then $X$ is Anosov.
\end{theorem}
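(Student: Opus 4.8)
The plan is to transpose the proof of Theorem~\ref{maintheorem} to the flow setting, replacing the derivative cocycle $Df$ by the \emph{linear Poincar\'e flow} $P^t_X$. Since $X$ has no singularities, the normal bundle $N=\bigcup_{x\in M}N_x$, with $N_x=\langle X(x)\rangle^{\perp}$, is a continuous vector bundle over all of $M$, and $P^t_X\colon N_x\to N_{X_t(x)}$ --- the composition of $DX_t|_{N_x}$ with the orthogonal projection onto $N_{X_t(x)}$ --- is a well-defined linear cocycle over the flow. A hyperbolic periodic orbit carries a hyperbolic splitting $N=N^s\oplus N^u$ of its normal bundle, its index being $\dim N^s$, and, the flow having no singularities, $X$ is Anosov if and only if $N$ admits a hyperbolic splitting $N=E^s\oplus E^u$ over all of $M$ for $P^t_X$. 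The tools needed are the divergence free versions of the Franks lemma (a $C^1$-small perturbation of $X$ in $\vm$ realizing a prescribed small perturbation of $P^t_X$ along a periodic orbit), Hayashi's connecting lemma, the ergodic closing lemma (giving a total probability set $\Sigma(X)$), and Pugh's closing lemma; these are available in the incompressible flow literature (cf.\ \cite{BR}, \cite{BFR} and references therein).

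\emph{Step 1 (the index is locally constant).} As in Proposition~\ref{p.index}, I would show there are a neighborhood $\SU$ of $X$ in $\vm$ and an integer $i$ such that every periodic orbit of every $Y\in\SU$ has index $i$. Otherwise there would be periodic orbits of indices $i$ and $i+j$ with $j>0$; using the conservative connecting lemma and the divergence free analogue of the Abdenur--Bonatti--Crovisier--D\'iaz--Wen result applied to the periodic linear system induced by $P^t_X$, one first produces, after a perturbation, periodic orbits of consecutive indices $i$ and $i+1$ sitting in a heterodimensional cycle, and then shadows that cycle by periodic orbits $p_{mn}$ exactly as in the proof of Proposition~\ref{p.index}. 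Estimating the one-dimensional central Lyapunov exponent of the Poincar\'e return maps as in \eqref{equ6}--\eqref{eq7} yields a periodic orbit with an exponent arbitrarily close to zero, which the Franks lemma turns into a non-hyperbolic periodic orbit inside $\SU$, a contradiction.

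\emph{Step 2 (dominated splitting and hyperbolicity).} With the index now fixed, say equal to $s$, the linear-Poincar\'e-flow, divergence free version of Proposition~\ref{p.domina} (the flow analogue of Ma\~n\'e's Proposition~II.1 --- essentially Liao's theorem on the linear Poincar\'e flow --- together with the appendix adaptations) gives a neighborhood $\SU$ of $X$, constants $K>0$, $T>0$, $0<\lambda<1$, a $P^t_Y$-invariant continuous splitting $N|_{\ov{Per(Y)}}=E\oplus F$ with $\dim E=s$ extending the hyperbolic splitting along periodic orbits, and the corresponding uniform dominated and contracting/expanding estimates for all $Y\in\SU$. One then argues as in Section~4: if $E$ were not uniformly contracted by $P^t_X$, compactness of $\ov{Per(X)}$ would provide a point with $\liminf_{t\to\infty}\|P^t_X|_{E}\|>0$; averaging along its orbit and invoking Birkhoff's theorem produces an invariant measure $\mu$ on $\ov{Per(X)}$ with $\int\log\|P^T_X|_{E}\|\,d\mu\ge 0$, and intersecting with $\Sigma(X)$ gives a non-periodic (by part~(c)) point $y\in\Sigma(X)\cap\ov{Per(X)}$ with non-negative $E$-exponent. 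Approximating $X$ by a divergence free $Y$ with a periodic orbit shadowing a long segment of the orbit of $y$, and using the conservative Franks lemma to replace $P^t_Y$ along that orbit by (volume preserving) isometric copies of $P^t_X$ along the orbit of $y$, one produces a periodic orbit violating part~(a), a contradiction; hence $N|_{\ov{Per(X)}}=E\oplus F$ is hyperbolic. Finally, $\Omega(X)=M$ by Poincar\'e recurrence; since every $Y$ near $X$ has only hyperbolic periodic orbits, the number $\sharp H_n(Y)$ of periodic orbits of period at most $n$ is finite and locally constant, so Pugh's closing lemma, applied near a hypothetical point of $\Omega(X)\setminus\ov{Per(X)}$ as in Section~4, forces $\ov{Per(X)}=\Omega(X)=M$; thus $M$ is hyperbolic for $P^t_X$ and $X$ is Anosov.

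\emph{Main obstacle.} The delicate point is not the measure-theoretic or dominated-splitting machinery, which copies Section~4 almost verbatim, but the interface between the cocycle $P^t_X$ and the vector field: one must guarantee that the linear-algebra perturbations of $P^t_X$ used to create heterodimensional cycles, to move indices, and to kill an exponent are all realized by $C^1$-small divergence free perturbations of $X$ (the incompressible Franks lemma for flows), and one must control how periods vary under these perturbations and how the reparametrization relating $DX_t$ to $P^t_X$ behaves --- this is exactly the technical core of the Gan--Wen argument, here in its incompressible form, and the no-singularity hypothesis is precisely what keeps $N$, and hence $P^t_X$, globally well-behaved.
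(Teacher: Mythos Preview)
Your proposal is correct and follows essentially the same route as the paper's Section~6: extend Proposition~\ref{p.domina} to the linear Poincar\'e flow, use heterodimensional-cycle/Franks-lemma arguments (as in \cite{GW}) together with the incompressible Bonatti--Crovisier transitivity result from \cite{Bessa} to force a constant index, then run the Ma\~n\'e/Toyoshiba argument of Section~4 to get hyperbolicity of $\ov{Per(X)}=\Omega(X)=M$. The only organizational difference is that the paper isolates ``no heterodimensional cycles'' as a separate proposition before deducing constant index, whereas you fold both into one step; the content and the key obstacle you identify (the incompressible Franks lemma for flows) match the paper's outline.
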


In fact, this could be seen as follows. First of all, proposition \ref{p.domina} can be extended to the context of incompressible flows exactly as Hayashi did in \cite{Hayashi}. And the same calculations that we did on heterodimensional cycles (which are inspired on the calculations of \cite{GW}) can be done to prove the following proposition.

\begin{proposition}
If $X\in \sm$ has no singularities then $X$ has no heterodimensional cycles.
\end{proposition}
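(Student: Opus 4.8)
The plan is to argue by contradiction and to reproduce, almost verbatim, the argument used for Proposition~\ref{p.index}, working with the linear Poincar\'e flow in place of the derivative cocycle. Suppose $X\in\sm$ has no singularities but possesses a heterodimensional cycle associated to hyperbolic periodic orbits $p$ and $q$ of indices $i$ and $i+j$, with $j>0$. Since $X$ has no singularities, the linear Poincar\'e flow is uniformly defined along every regular orbit, and the flow version of Proposition~\ref{p.domina} --- obtained as in \cite{Hayashi} --- provides a neighborhood $\SU\subset\vm$ of $X$ together with constants $K>0$, $m\in\N$ and $0<\lambda<1$ controlling the contraction and expansion along periodic orbits, and a dominated splitting over the closure of the set of index-$\alpha$ periodic orbits for each intermediate $\alpha$.

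First I would reduce to a cycle between orbits of consecutive indices. The incompressible-flow version of the Abdenur--Bonatti--Crovisier--Diaz--Wen mechanism (the analogue of Proposition~\ref{propABCDW}, whose proof uses only Franks' lemma and Hayashi's connecting lemma, both available for divergence-free flows) yields, after a perturbation inside $\SU$, hyperbolic periodic orbits of indices $i$ and $i+1$ in a heterodimensional cycle. Using a Bonatti--Crovisier-type genericity statement for incompressible flows together with Hayashi's connecting lemma in the conservative flow setting, I would then arrange that the ambient manifold is a single homoclinic class, that the two orbits are homoclinically related to it, and that the cycle is realized robustly; the flow version of Proposition~\ref{p.domina} supplies a persistent dominated splitting $E\oplus C\oplus F$ over that class with $\dim E=i$ and $\dim C=1$.

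The heart of the proof is the shadowing construction taken from the proof of Proposition~\ref{p.index}. Using the $\lambda$-lemma along the cycle one builds pseudo-periodic orbits that pass $m$ times close to the orbit of $p$ and $n$ times close to the orbit of $q$; a flow analogue of Xia's local perturbation lemma (equivalently, the local perturbations underlying Hayashi's connecting lemma) closes these pseudo-orbits into genuine hyperbolic periodic orbits $p_{mn}$ of flows $Y_{mn}$ close to $X$ in $\vm$. Fixing a large $n$ and choosing $m=m(n)$ maximal so that $p_{mn}$ and $p_{m-1\,n}$ are hyperbolic of indices $i$ and $i+1$ for two flows agreeing off a small tube around the orbit of $p$, the telescoping estimate on the one-dimensional central exponent exactly as in (\ref{equ6})--(\ref{eq7}) gives
\begin{align*}
0<\frac{1}{\tau}\log\| DY_{mn}^{\tau}(p_{mn})|C\|<4\gamma+\frac{K}{\tau},
\end{align*}
where $\tau$ is the period of $p_{mn}$ and $\gamma$ bounds the oscillation of the cocycle on the tube around $p$. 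Letting $n\to\infty$, so that $\tau\to\infty$, and then $\gamma\to0$, we obtain a hyperbolic periodic orbit with central Lyapunov exponent arbitrarily close to zero; a final application of Franks' lemma for divergence-free flows along that orbit turns it into an orbit with a zero exponent, hence a non-hyperbolic periodic orbit of a flow in $\SU\subset\sm$, a contradiction.

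The main obstacle is not conceptual but bookkeeping: one must assemble the incompressible-flow versions of the perturbation toolbox --- Franks' lemma along an orbit segment preserving the divergence-free condition and respecting the flow direction and time parametrization, Hayashi's connecting lemma, the ergodic closing lemma, and a Bonatti--Crovisier genericity statement --- and verify that the one-dimensional-center computation of Proposition~\ref{p.index} is unaffected when the derivative cocycle is replaced by the linear Poincar\'e flow. The hypothesis that $X$ has no singularities is used precisely to keep the linear Poincar\'e flow uniformly defined and to avoid the well-known complications of heterodimensional cycles involving singularities.
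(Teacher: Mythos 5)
Your proposal is correct and follows essentially the same route the paper intends: the paper's own ``proof'' is just the remark that the flow version of Proposition~\ref{p.domina} can be obtained as in Hayashi's work and that the heterodimensional-cycle calculations from the proof of Proposition~\ref{p.index} (themselves modelled on Gan--Wen) transfer to the linear Poincar\'e flow, which is precisely the telescoping central-exponent argument you reproduce. Your sketch actually supplies more detail than the paper does, and correctly identifies the only real issues (the divergence-free perturbation toolbox for flows and the absence of singularities keeping the linear Poincar\'e flow well defined).
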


Now, we observe that the version of Bonatti-Crovisier's result lemma \ref{BC} to incompressible flows can be found in \cite{Bessa}. Hence, together with the previous proposition we obtain the following proposition.

\begin{proposition}
If $X\in \sm$ has no singularities then there exist a neighborhood $\SU$ of $X$ in $\vm$ and $i\in \N$, such that the index of any periodic orbit of any vector field $Y\in \SU$ is $i$.
\end{proposition}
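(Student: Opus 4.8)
The plan is to imitate the proof of Proposition \ref{p.index} step by step, using the two propositions that precede this one in the present section as the incompressible-flow replacements for the ``a heterodimensional cycle forces a non-hyperbolic orbit'' mechanism, and replacing the conservative Franks and connecting lemmas used there by their divergence-free flow counterparts (all of which rest ultimately on the incompressible extension of Proposition \ref{p.domina}, available as in \cite{Hayashi}).

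Suppose the statement fails. First I would produce a divergence-free vector field $Y$, arbitrarily $C^1$-close to $X$, carrying two hyperbolic periodic orbits $p$ and $q$ of indices $i$ and $i+j$ with $j>0$. To go from ``the index is not locally constant'' to ``two orbits of distinct index inside a single $Y$'' I would use that hyperbolic periodic orbits persist under $C^1$-perturbations, that $\Omega(Y)=M$ by Poincar\'e recurrence so that Pugh's closing lemma makes periodic orbits dense after an arbitrarily small perturbation, and the incompressible-flow version of Lemma \ref{BC} due to Bessa \cite{Bessa}, by which a generic $Y\in\vm$ has the whole manifold as a single homoclinic class, hence is transitive. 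Since a $C^1$-small perturbation of a nowhere-vanishing field on a compact manifold is still nowhere vanishing, such a $Y$ still has no singularity and still lies in $\sm$. Then I would invoke the divergence-free flow version of the index-jump result of Abdenur--Bonatti--Crovisier--Diaz--Wen \cite{ABCDW} --- valid here for the same reason Proposition \ref{propABCDW} was, since its only perturbative ingredients are the Franks-type Lemma \ref{l.franks} (in its flow form) and Hayashi's connecting lemma --- to reduce to the case $j=1$; alternatively one can skip this step and build the cycle below directly, a heterodimensional cycle being available for any $j>0$.

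Next, placing $Y$ in Bessa's residual set so that the whole manifold is the homoclinic class $H(p,Y)$ (the orbits $p$ and $q$ surviving by hyperbolicity), I would use Hayashi's connecting lemma for incompressible flows together with transitivity to first create a transverse, hence robust, intersection of $W^u(p)$ with $W^s(q)$, and then, applying the connecting lemma once more, an intersection of $W^s(p)$ with $W^u(q)$. This yields, for some $Y$ as $C^1$-close to $X$ as we wish, a heterodimensional cycle associated with $p$ and $q$. But $Y\in\sm$ has no singularity, so the preceding proposition forbids heterodimensional cycles for $Y$, a contradiction; hence the neighborhood $\SU$ and the index $i$ exist.

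The step I expect to be the main obstacle is the first one: extracting, from the mere failure of local constancy of the index, an honest pair of hyperbolic periodic orbits of distinct --- then adjacent --- indices lying in a single divergence-free vector field near $X$, while ensuring that none of the perturbations involved leaves $\vm$ or creates a singularity. This is the point that is only sketched in the discrete case, and it is where the genericity inputs of \cite{Bessa} (transitivity and the single-homoclinic-class property for incompressible flows) must be combined with the Franks-type and connecting lemmas in the divergence-free category, exactly along the lines of \cite{GW}.
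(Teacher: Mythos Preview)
Your proposal is correct and follows essentially the same route as the paper. The paper's own argument here is a single sentence: it combines Bessa's incompressible version of Lemma \ref{BC} with the immediately preceding proposition (no heterodimensional cycles for nonsingular $X\in\sm$), exactly as you do, and your write-up simply unpacks that combination --- producing the two distinct-index orbits, applying the connecting lemma twice to build the cycle, and invoking the contradiction --- along the same lines already detailed for Proposition \ref{p.index}.
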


Now, we can adapt the arguments of Toyoshiba \cite{Toyo}, in the same way that as we did before, to obtain the following result.

\begin{proposition}
If $X\in \sm$ has no singularities and there exist a neighborhood $\SU$ of $X$ in $\vm$ and $i\in \N$, such that the index of any periodic orbit of any vector field $Y\in \SU$ is $i$. Then $X|_{\ov{Per(X)}}$ is hyperbolic and $\ov{Per(X)}=\Omega(X)$.
\end{proposition}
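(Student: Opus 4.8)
The plan is to mimic the argument that was used for diffeomorphisms in the proof of Theorem \ref{maintheorem} (Section 4), transported to the flow setting via the time-$t$ map or, more precisely, via the linear Poincar\'e flow. The statement to be proved says: if $X\in\sm$ has no singularities and the index of every periodic orbit of every nearby $Y\in\vm$ is a fixed $i$, then $X|_{\ov{Per(X)}}$ is hyperbolic and $\ov{Per(X)}=\Omega(X)$. The starting point is the flow version of Proposition \ref{p.domina}, which (as remarked in the text) is obtained exactly as in Hayashi's work: for $X\in\sm$ there are a $C^1$-neighborhood $\SU$, constants $K>0$, $m\in\N$, $0<\lambda<1$, and for each admissible index a continuous dominated splitting over $\Lambda_i(Y)$ for the linear Poincar\'e flow, with the exponential estimates along periodic orbits of periods $\geq m$ and the strict negativity of the relevant Lyapunov-type averages along every periodic orbit.

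First I would fix $X$, take the constant index $s$, and consider the dominated splitting $N_{\ov{Per(X)}}=E\oplus F$ for the linear Poincar\'e flow given by (the flow version of) Proposition \ref{p.domina}, with $\dim E=s-1$, $\dim F=d-1-s$ (or the appropriate dimensions, since the flow direction is split off). Exactly as in Section 4 it suffices, by compactness of $\ov{Per(X)}$, to show that $\liminf_{t\to+\infty}\|\Phi^t_X(x)|E(x)\|=0$ for every $x\in\ov{Per(X)}$, where $\Phi^t_X$ is the linear Poincar\'e flow, and the symmetric statement for $F$ under $X\mapsto -X$. Suppose this fails for some $x$. Form the empirical measures $\mu_j=\frac1j\sum_{i=0}^{j-1}\delta_{X_{mi}(x)}$ along the time-$m$ orbit, extract a weak-$*$ limit $\mu$ which is $X_m$-invariant, average it to an $X$-invariant measure $\nu$ supported on $\ov{Per(X)}$, and apply the ergodic closing lemma in the volume-preserving (incompressible) setting — the divergence-free ergodic closing lemma is available, cf. the references used for the diffeomorphism case — to produce a non-periodic point $y\in\Sigma(X)\cap\ov{Per(X)}$ whose forward Birkhoff average of $\log\|\Phi^m_X(\cdot)|E(\cdot)\|$ is $\geq 0$; part (c) of the flow version of Proposition \ref{p.domina} is precisely what forbids $y$ from being periodic. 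Then, for $g$ near $X$ and a periodic orbit $p$ of $g$ shadowing a long piece of the orbit of $y$ (period tending to infinity since $y$ is not periodic), I construct volume-preserving linear identifications $A_i$ between the normal fibers along the orbit of $p$ and along the orbit of $y$, carrying $E^s_g$ to $E$ isometrically and $E^u_g$ to $F$, and define $L_i=A_{i+1}^{-1}\Phi^{\Delta t}_X(y)A_i$; these are $C^1$-close to the true derivative cocycle of $g$, so by the Franks lemma in the conservative setting (Lemma \ref{l.franks}, in its flow form) I obtain $h\in\SU$ with $p$ periodic and derivative cocycle $L_i$ along the orbit of $p$. This forces $\prod_{i=0}^{k-1}\|\Phi^m_h(h_{im}(p))|E^s_h\|$ to equal the corresponding product along $y$, which is $\geq\lambda_0^k$ by the choice of $y$, contradicting part (a) of the flow version of Proposition \ref{p.domina}. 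This establishes hyperbolicity of $\ov{Per(X)}$.

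For the equality $\ov{Per(X)}=\Omega(X)$ I would argue exactly as at the end of Section 4: in a $C^1$-neighborhood $\SU$ of $X$ in $\vm$ the number of periodic orbits of any $Y\in\SU$ with period $\leq n$ is finite and locally constant (all periodic orbits of all such $Y$ being hyperbolic); if some $x\in\Omega(X)\setminus\ov{Per(X)}$ existed, Pugh's closing lemma in the conservative setting produces, by perturbations localized in an arbitrarily small neighborhood of a finite orbit-segment of $x$ disjoint from a fixed neighborhood $U$ of $\ov{Per(X)}$, a nearby $Y\in\SU$ equal to $X$ on $U$ but with a new periodic orbit outside $U$, contradicting local constancy of the count. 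Hence $\Omega(X)=\ov{Per(X)}$, which with the hyperbolicity just proved gives the conclusion.

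The main obstacle is genuinely technical rather than conceptual: everything must be phrased through the linear Poincar\'e flow (the tangent flow itself is never hyperbolic transverse to $X$), so one needs the incompressible ergodic closing lemma, the Franks and closing lemmas, and the dominated-splitting proposition all in their linear-Poincar\'e-flow, divergence-free forms; the delicate point is ensuring that the identifications $A_i$ and the perturbed cocycle $L_i$ can be realized by an actual divergence-free vector field close to $X$ — this is exactly where the conservative Franks lemma and the absence of singularities (which keeps the normal bundle and the reparametrizations under control) are used, and it is the step that requires the hypothesis $X\in\sm$ with no singularities rather than merely a periodic-orbit condition.
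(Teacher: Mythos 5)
Your proposal is correct and follows essentially the same route as the paper, which for this proposition offers no details beyond ``adapt the arguments of Toyoshiba, in the same way as we did before'' --- i.e.\ exactly the Section~4 (Ma\~n\'e-style) argument re-run through the linear Poincar\'e flow, with the flow versions of Proposition~\ref{p.domina}, the conservative Franks lemma, the divergence-free ergodic closing lemma, and Pugh's closing lemma as inputs. You correctly identify the only genuinely delicate point, namely realizing the perturbed normal cocycle by an actual divergence-free vector field, which is where the absence of singularities and the incompressible Franks lemma are used.
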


Finally, by Poincar\'e's recurrence, we know that $\Omega(X)=M$ and this would imply that $X$ is Anosov.

If we denote by $KS$ the $C^1$-interior of Kupka-Smale incompressible vector fields then $KS\subset \sm$. Now, suppose that $X\in KS$ and $X$ has a singularity $\sigma$. Then since $\Omega(X)=M$ and there are a finite number of singularities then $\sigma$ is approximated by regular orbits and so, by the connecting lemma, after a perturbation we obtain that $W^u(\sigma)\cap W^s(\sigma)-\{\sigma\}\neq \emptyset$. Thus this would be a non-transversal intersection, but this is a contradiction since $X\in KS$. In particular, the analogous result of Toyoshiba holds for incompressible flows.

\begin{corollary}
Let $X\in KS$ then $X$ is Anosov.
\end{corollary}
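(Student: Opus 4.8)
The plan is to reduce the statement to the star-flow theorem already announced in this section. Suppose $X\in KS$, i.e. $X$ lies in the $C^1$-interior of the Kupka-Smale incompressible vector fields. Then $X\in\sm$, so the previous theorem applies once we know $X$ has no singularities. Hence the whole content of the proof is: \emph{an element of $KS$ cannot have a singularity.} Assume, for contradiction, that $X$ has a singularity $\sigma$. Since $X\in\sm\subset\vm$, Liouville's formula gives that the flow of $X$ preserves volume, so by Poincar\'e's recurrence $\Omega(X)=M$; in particular $\sigma$ is a non-isolated point of $\Omega(X)$, and since the singularities of a Kupka-Smale field are finite in number (being hyperbolic and hence isolated among singularities), $\sigma$ must be accumulated by regular recurrent orbits.

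The second step is to invoke the connecting lemma for conservative (incompressible) flows — the flow analogue of Hayashi's connecting lemma already used in Section 3 for conservative diffeomorphisms — to join a branch of $W^u(\sigma)$ to a branch of $W^s(\sigma)$. Concretely, using recurrence near $\sigma$ one finds points on $W^u(\sigma)\setminus\{\sigma\}$ that return arbitrarily close to $W^s(\sigma)\setminus\{\sigma\}$, and a $C^1$-small perturbation $Y\in\vm$ of $X$, supported away from $\sigma$ (so $\sigma$ remains a hyperbolic singularity of $Y$ with the same invariant manifolds near $\sigma$), produces a genuine intersection $W^u_Y(\sigma)\cap W^s_Y(\sigma)\setminus\{\sigma\}\neq\varnothing$. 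Because $X$ is in the \emph{interior} of the Kupka-Smale set, we may take $Y$ close enough to remain Kupka-Smale.

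The final step is the dimension count that yields the contradiction. For a hyperbolic singularity in a volume-preserving (divergence-free) flow one has $\dim W^s(\sigma)+\dim W^u(\sigma)=\dim M$, since the sum of the real parts of the eigenvalues of $DX(\sigma)$ is the divergence, which vanishes, so there is no room for a center direction and both indices add up to $d$. Consequently a non-trivial orbit in $W^u_Y(\sigma)\cap W^s_Y(\sigma)$ would be a transverse-dimension intersection whose tangent spaces have dimensions summing to $d$, yet the intersection is at least one-dimensional (it is a flow orbit), so it cannot be transverse: $T_xW^u_Y(\sigma)+T_xW^s_Y(\sigma)\neq T_xM$ along it. This non-transverse homoclinic orbit for the hyperbolic singularity $\sigma$ of $Y$ violates the Kupka-Smale condition, contradicting $Y$ being Kupka-Smale. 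Hence $X$ has no singularity, and by the theorem above $X$ is Anosov.

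The main obstacle is purely a matter of citing the right conservative connecting lemma for flows and checking that the perturbation can be localized away from $\sigma$ (so that $\sigma$ and its local invariant manifolds are undisturbed) while keeping $Y$ inside the open Kupka-Smale set; everything else is the standard eigenvalue/divergence bookkeeping. This is exactly the argument sketched in the paragraph preceding the statement, so the write-up is essentially a matter of making those three steps explicit.
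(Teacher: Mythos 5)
Your proposal is correct and follows exactly the paper's argument: $KS\subset\sm$, a singularity $\sigma$ would be accumulated by regular orbits since $\Omega(X)=M$, the conservative connecting lemma then produces a homoclinic orbit of $\sigma$ for a nearby Kupka--Smale field, and such an orbit is necessarily non-transversal (the tangent spaces of $W^s(\sigma)$ and $W^u(\sigma)$ sum to at most $d$ while their intersection contains the flow direction), a contradiction; hence $X$ has no singularities and the nonsingular star-flow theorem applies. The only cosmetic remark is that $\dim W^s(\sigma)+\dim W^u(\sigma)=d$ already follows from hyperbolicity of $\sigma$ alone, so the appeal to the vanishing divergence there is unnecessary (though harmless).
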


Finally, we observe that if $X$ is a $C^1$-structurally stable divergence free vector field then $X\in KS$. So the following corollary generalizes a corollary found in \cite{BR} to any dimension (not only 3).

\begin{corollary}
If $X\in \vm$ is a $C^1$-structurally stable divergence free vector field then $X$ is Anosov.
\end{corollary}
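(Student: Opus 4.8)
The plan is to reduce the statement to the preceding corollary, which asserts that every vector field in $KS$ (the $C^1$-interior of Kupka--Smale incompressible vector fields) is Anosov. Thus it suffices to show that a $C^1$-structurally stable divergence free vector field $X$ lies in $KS$. First I would recall that structural stability is an open property: if $X$ is $C^1$-structurally stable in $\vm$, then there is a $C^1$-neighborhood $\SU$ of $X$ in $\vm$ all of whose elements are also structurally stable (every $Y\in\SU$ is topologically equivalent to $X$, hence to every other element of $\SU$). So it is enough to prove that a structurally stable divergence free vector field is Kupka--Smale, since then the whole neighborhood $\SU$ consists of Kupka--Smale vector fields and $X$ is in the $C^1$-interior of that set.

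The core step, then, is: structural stability implies Kupka--Smale. This is the classical fact that structurally stable systems have all singularities and periodic orbits hyperbolic and all stable/unstable manifolds of critical elements meeting transversally. The argument is the standard one: if $X$ had a non-hyperbolic singularity or periodic orbit, one could use the conservative Franks lemma (Lemma \ref{l.franks}, or rather its flow analogue) to perturb inside $\vm$ and change the number of critical elements near that orbit, producing a nearby divergence free vector field that cannot be topologically equivalent to $X$, contradicting stability. Likewise, a non-transversal intersection of invariant manifolds of hyperbolic critical elements can be removed (or a new one created) by a conservative connecting-lemma-type perturbation, again breaking the conjugacy. Hence $X$ is Kupka--Smale, and by openness of stability, $X\in KS$.

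Applying the previous corollary to $X\in KS$ gives that $X$ is Anosov, which is the assertion. The main obstacle is the flow-version, conservative analogue of the perturbation lemmas needed in the step "structurally stable $\Rightarrow$ Kupka--Smale": one must be careful that the perturbations destroying non-hyperbolic elements or non-transverse intersections can be carried out inside the divergence free class, not merely among all $C^1$ vector fields. These tools are exactly the divergence free Franks lemma and Hayashi's connecting lemma for conservative flows, both invoked already in Section 6, so once one grants them the reduction is routine. I would also note that an alternative, even shorter route is available: since $X$ structurally stable forces $\Omega(X)=M$ to be "stable," and the argument given just before the corollary already shows a Kupka--Smale incompressible field with a singularity leads to a non-transversal homoclinic intersection at that singularity (via $\Omega(X)=M$ and the connecting lemma), one concludes $X$ has no singularities; combined with the no-singularities case treated by the chain of Propositions in Section 6, $X$ is Anosov.
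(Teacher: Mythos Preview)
Your proposal is correct and follows exactly the route the paper takes: the paper's entire argument is the single sentence ``if $X$ is a $C^1$-structurally stable divergence free vector field then $X\in KS$'', after which the previous corollary is applied. You have simply unpacked that sentence, supplying the standard justification (openness of structural stability plus the classical ``structurally stable $\Rightarrow$ Kupka--Smale'' argument, carried out with the conservative perturbation lemmas) that the paper leaves implicit.
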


{\bf Acknowledgements:} A.A. wants to thank Prof. Sylvain Crovisier by his remarkable comments and for pointing out a mistake in a previous version of this article. He wants to thank Prof. Jorge Rocha for kindly provide reference \cite{BR}.  He also wants to thank ICMC-USP for the kind hospitality. T.C. wants to thank UFRJ for the kind hospitality during preparation of this work.

\section{Appendix}

In this appendix, we show the necessarily modifications to prove proposition \ref{p.domina}. In particular, we will review Ma\~n\'e's argument from \cite{Mane} from page 523 to 540.

First we recall some notions introduced by Ma\~n\'e in our context. Let $GL(d)$ be the group of linear isomorphisms and $SL(d)$ be the subgroup of $GL(d)$ of isomorphisms  with determinant equal to one. By a hyperbolic sequence we mean a sequence hyperbolic isomorphisms $\xi:\Z\to GL(d)$. The sequence is periodic if there exists $m$ such that $\xi_{j+m}=\xi_j$ for $j\in \Z$, the minimal positive $m$ is called the period of the sequence. If  the stable space is the whole $\R^d$ then we call it a contracting sequence. If the sequence is formed by volume preserving isomorphisms, i.e. $\xi:\Z\to SL(d)$ then we call it a vol-hyperbolic sequence. Also. a family of periodic sequences $\{\xi^{\alpha}\}$ will be called a vol-family.

A family of periodic sequences $\{\xi^{\alpha}\}$ is vol-hyperbolic, if all of its sequences are vol-hyperbolic and $\sup_{\alpha,j}\{\|\xi_j^{\alpha}\|\}<\infty$. Define a distance $d(\xi,\eta)=\sup_{\alpha,j}\{\|\xi_j^{\alpha}-\eta_j^{\alpha}\|$ between two families $\xi$ and $\eta$, we also say that they are equivalent if for every $\alpha$ the period of $\xi^{\alpha}$ and $\eta^{\alpha}$ coincide.
Moreover the family $\xi$ is uniformly vol-hyperbolic if there exists $\eps>0$ such that every equivalent vol-family $\eta$ which satisfies $d(\xi,\eta)<\eps$ is vol-hyperbolic.

The proof of proposition \ref{p.domina} goes as the same way as in \cite{Mane}, using Franks lemma in the volume preserving case (lemma \ref{l.franks}) and the following proposition, which is analogous to lemma II.3 of \cite{Mane}.

\begin{proposition}
\label{l.linear}
If $\{\xi^{\alpha}\}$ is an uniform vol-hyperbolic family then there exist constants $K>0$, $m\in \N$ and $0<\lambda<1$ such that
\begin{enumerate}
\item If $\xi^{\alpha}$ has period $n\geq m$ and $k=[n/m]$ then
$$\prod_{j=0}^{k-1}\|(\prod_{i=0}^{m-1}\xi^{\alpha}_{mj+i})|_{E^s_{mj}}\|\leq K\lambda^k\textrm{ and }\prod_{j=0}^{k-1}\|(\prod_{i=0}^{m-1}\xi^{\alpha}_{mj+i})^{-1}|_{E^u_{m(j+1)}}\|\leq K\lambda^k.$$
\item For every $\alpha$ and $j\in \Z$:
$$\|(\prod_{i=0}^{m-1}\xi^{\alpha}_{j+i})|_{E^s_{j}}\|\|(\prod_{i=0}^{m-1}\xi^{\alpha}_{j+i})^{-1}|_{E^u_{j+m}}\|\leq \lambda.$$
\item For every $\alpha$:
$$\limsup_{n\to\infty}\frac{1}{n}\sum_{j=0}^{n-1}\log\|(\prod_{i=0}^{m-1}\xi^{\alpha}_{mj+i})|_{E^s_{mj}}\|<0\textrm{ and}$$
$$\limsup_{n\to\infty}\frac{1}{n}\sum_{j=0}^{n-1}\log\|(\prod_{i=0}^{m-1}\xi^{\alpha}_{mj+i})^{-1}|_{E^u_{m(j+1)}}\|<0.$$
\end{enumerate}
\end{proposition}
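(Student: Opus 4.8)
The strategy is to transcribe Ma\~n\'e's proof of Lemma~II.3 of \cite{Mane}, checking at each step that the linear perturbations it uses can be performed inside $SL(d)$. This causes no trouble: the hypothesis of uniform vol-hyperbolicity is, by definition, robustness against perturbations \emph{within the class of vol-families}, and the perturbations of linear maps appearing in \cite{Mane} are either volume-preserving already (small rotations merging two directions, say) or can be symmetrised across complementary subspaces so that the determinant stays equal to $1$, exactly as in Remark~7.2 of \cite{BDP}. Observe also that $Q:=\sup_{\alpha,j}\|\xi^\alpha_j\|<\infty$ together with $\det\xi^\alpha_j=1$ yields for free $\sup_{\alpha,j}\|(\xi^\alpha_j)^{-1}\|\le Q^{\,d-1}$, so after enlarging $Q$ we may assume it bounds the inverses as well. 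I would establish the conclusions in the order (2), (1), (3).

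The first ingredient is a \emph{uniform transversality estimate}: there is $\theta_0>0$ such that the angle between the stable space $E^s_j$ and the unstable space $E^u_j$ of $\xi^\alpha$ is at least $\theta_0$ for every $\alpha$ and every $j\in\Z$. If this failed, one would choose systems $\xi^{\alpha(k)}$ and sites $j(k)$ along which the angle tends to $0$; translating so that $j(k)=0$ and using the compactness afforded by the bound $Q$, one would pass to a limit of the (periodic) sequences and then, by an arbitrarily small $SL(d)$-perturbation gluing a stable to an unstable direction, produce an equivalent vol-family within distance $\eps$ of $\xi$ that is not vol-hyperbolic, contradicting the hypothesis. This transversality lets one compare, up to a constant depending only on $\theta_0$ and $d$, the restricted norms $\|A|_{E^s}\|$ and $\|A^{-1}|_{E^u}\|$ with operator norms of the corresponding invariant blocks of $A$.

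The second and crucial ingredient is the \emph{uniform domination}, which is conclusion (2). Writing $\Xi^\alpha_j:=\prod_{i=0}^{m-1}\xi^\alpha_{j+i}$, one seeks $m\in\N$ and $0<\lambda<1$ with $\|\Xi^\alpha_j|_{E^s_j}\|\,\|(\Xi^\alpha_j)^{-1}|_{E^u_{j+m}}\|\le\lambda$ for all $\alpha,j$. Arguing by contradiction, if for each $m$ there were $\alpha(m)$ and $j(m)$ at which this product exceeds some fixed constant $>1$, then, using the uniform transversality of the previous step, the bound $Q$, and periodicity, one extracts a bi-infinite sequence of $SL(d)$-maps along which the candidate stable and unstable behaviours violate domination at every block length, and then perturbs it — and this is precisely where Lemma~\ref{l.franks}, i.e.\ Proposition~7.4 and Remark~7.2 of \cite{BDP}, are used — inside $SL(d)$ to build a non-hyperbolic equivalent vol-family arbitrarily close to $\xi$, a contradiction. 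The one genuinely new point relative to \cite{Mane} is keeping these perturbations volume-preserving, which, as noted above, is done by performing them symmetrically on $E^s$ and $E^u$.

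With $m$ and $\lambda$ fixed, conclusion (1) is obtained from (2) by Ma\~n\'e's argument: for $\xi^\alpha$ of period $p\ge m$ and $k=[p/m]$ one telescopes the domination inequality over the $k$ successive $m$-blocks, uses the hyperbolicity of $\xi^\alpha$ over one full period to start the estimate, and uses the previous step to absorb the at most $m$ leftover factors into a uniform constant $K$; alternatively (1) may be proved directly by the contradiction scheme of the previous paragraph. The unstable half of (1) is the stable half applied to the family of reversed inverse sequences, which is again uniformly vol-hyperbolic. Finally, (3) follows from (1) applied to the $N$-fold concatenations of $\xi^\alpha$: from $\log\prod_{j=0}^{[Np/m]-1}\|\Xi^\alpha_{mj}|_{E^s_{mj}}\|\le\log K+[Np/m]\log\lambda$, dividing by $[Np/m]$ and letting $N\to\infty$ gives $\limsup_{n\to\infty}\frac1n\sum_{j=0}^{n-1}\log\|\Xi^\alpha_{mj}|_{E^s_{mj}}\|\le\log\lambda<0$, and symmetrically for the unstable term. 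The main obstacle is the domination step together with the bookkeeping that keeps every perturbation inside $SL(d)$; everything else is a faithful transcription of \cite{Mane}.
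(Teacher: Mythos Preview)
Your overall strategy---transcribe Ma\~n\'e's Lemma~II.3 while keeping every linear perturbation inside $SL(d)$---is exactly the paper's, and your remarks on how to force determinant~$1$ (compensating on a complementary invariant subspace, or dividing by a determinant close to~$1$) are the same devices the paper uses. However, the logical architecture you propose, $(2)\Rightarrow(1)\Rightarrow(3)$, is not Ma\~n\'e's and contains a genuine gap.

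The problematic step is your derivation of (1) from (2). Domination says $\|\Xi^\alpha_j|_{E^s_j}\|\le\lambda\,/\,\|(\Xi^\alpha_j)^{-1}|_{E^u_{j+m}}\|$, so telescoping gives
\[
\prod_{j=0}^{k-1}\|\Xi^\alpha_{mj}|_{E^s_{mj}}\|\le \lambda^k\Big/\prod_{j=0}^{k-1}\|(\Xi^\alpha_{mj})^{-1}|_{E^u_{m(j+1)}}\|,
\]
and to conclude you would need a \emph{uniform lower bound} on the denominator. Hyperbolicity over one full period only tells you that the unstable eigenvalues have modulus~$>1$, with no uniformity in $\alpha$; ``uniform vol-hyperbolicity'' is a robustness hypothesis, not a rate hypothesis, and extracting uniform rates from it is precisely the content of (1) itself. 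So (2) alone does not yield (1) by telescoping. (Your parenthetical ``alternatively (1) may be proved directly by the contradiction scheme'' is closer to viable, but it is not the argument you actually give.)

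The paper, following Ma\~n\'e, proceeds in the opposite order. It first quotes Ma\~n\'e's Lemma~II.7 on \emph{uniformly contracting} families in $GL$, and then proves a vol-analogue of Lemma~II.8: if $\{\xi^\alpha\}$ is uniformly vol-hyperbolic, then the induced families $\{\xi^\alpha|_{E^s}\}$ and $\{(\overline{\xi}^\alpha)^{-1}\}$ on $\R^d/E^s$ are uniformly contracting in the $GL$ sense. The adaptation there is exactly your determinant trick: if one of these induced families were not uniformly contracting, Ma\~n\'e produces a nearby $GL$ sequence with an eigenvalue of modulus~$1$; one then rebuilds a vol-sequence by keeping this perturbation on one block and multiplying the complementary block by the inverse determinant. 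Applying Lemma~II.7 to these contracting families yields (1) and (3) directly. Only afterwards is (2) proved, via the angle lemma (Ma\~n\'e's II.9): the specific shear perturbation $\eta_0=\xi_0\begin{pmatrix}I&C\\0&I\end{pmatrix}$ is already in $SL(d)$, and the sequence Ma\~n\'e builds to contradict the angle bound has determinants close to~$1$, so one renormalises. Two minor points: your ``pass to a limit of periodic sequences'' for the angle bound is not Ma\~n\'e's argument (he constructs the perturbation directly from a single sequence with small angle), and Lemma~\ref{l.franks} is irrelevant here---this proposition is pure linear algebra and the perturbations are of linear maps, not of diffeomorphisms.
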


We only indicate the necessarily adaptations.

First, we recall lemma II.7 from \cite{Mane}, that will be used to control the stable and unstable parts of the isomorphisms.

\begin{lemma}
Let $\{\xi^{\alpha}\}$ be a uniformly contracting family. There exists $K>0$, $m\in \N$ and $0<\lambda<1$ such that
\begin{enumerate}
\item If $\xi^{\alpha}$ has period $n\geq m$ and $k=[n/m]$ then
$$\prod_{j=0}^{k-1}\|\prod_{i=0}^{m-1}\xi^{\alpha}_{mj+i}\|\leq K\lambda^k$$.
\item For every $\alpha$:
$$\limsup_{n\to\infty}\frac{1}{n}\sum_{j=0}^{n-1}\log\|\prod_{i=0}^{m-1}\xi^{\alpha}_{mj+i}\|<0.$$
\end{enumerate}
\end{lemma}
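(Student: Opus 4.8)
The plan is to note that this is precisely Lemma II.7 of \cite{Mane}, whose proof is a statement about periodic families of linear isomorphisms of $\re^d$ in which the volume-preserving structure plays no role; hence it carries over to our setting verbatim. I sketch only the mechanism, the details being in \cite{Mane}.

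The proof is by contradiction. One first records that a uniformly contracting family has uniformly bounded full-period products $\Xi^{\alpha}:=\xi^{\alpha}_{n-1}\circ\cdots\circ\xi^{\alpha}_{0}$: if $\|\Xi^{\alpha}\|$ were unbounded, choosing a unit vector $v$ with $\|\Xi^{\alpha}v\|$ large and replacing a single factor, say $\xi^{\alpha}_{n-1}$, by $(\mathrm{Id}+\vep' P)\xi^{\alpha}_{n-1}$, where $P$ feeds a small multiple of the direction of $\Xi^{\alpha}v$ back into $\mathrm{span}(v)$, produces an equivalent family within $\vep$ one of whose sequences has spectral radius $\ge 1$, contradicting uniform contraction. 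With this control one treats item (1): the blocks there have a \emph{fixed} length $m$ while their number $k=[n/m]$ is free, so if (1) failed for every $m$ and every $\lambda<1$ one would obtain sequences $\xi^{\alpha_{\ell}}$ with $m_{\ell}\to\y$, $k_{\ell}\to\y$, and with the geometric mean of the norms of the length-$m_{\ell}$ blocks tending to $1$; inserting inside each block small rotations — permissible precisely because the blocks are long, so the admissible per-step rotations compound to an arbitrary rotation — one realigns the most-expanded direction of each block with the input direction maximally expanded by the next one, so that the full-period product does not contract, once more contradicting uniform contraction. Item (2) follows from item (1) by taking logarithms, dividing by $n$, and letting $n$ run over the periods.

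This lemma is then used, exactly as on pages 523--540 of \cite{Mane}, to prove Proposition \ref{l.linear}: given a uniform vol-hyperbolic family one extracts the invariant splitting $\re^d=E^s\oplus E^u$ along each sequence, notes that $\{\xi^{\alpha}|_{E^s}\}$ and $\{(\xi^{\alpha})^{-1}|_{E^u}\}$ are uniformly contracting families (any perturbation of the restricted cocycle extends, keeping determinant one, to a vol-hyperbolic perturbation of the full one), applies the lemma to each, and transports the three conclusions back. The only point where the conservative hypothesis really intervenes is that every perturbation used along the way — the rotations, the feedback maps $\mathrm{Id}+\vep' P$ (take $P$ nilpotent), and the Franks-type replacements in the proof of Proposition \ref{p.domina} — must be realized inside $SL(d)$; this is supplied by Remark 7.2 of \cite{BDP} together with Lemma \ref{l.franks}. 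The main obstacle is therefore organizational rather than conceptual: to run Ma\~n\'e's contradiction schemes while keeping all perturbations volume preserving and of the same periods.
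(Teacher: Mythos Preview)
Your identification is exactly what the paper does: this lemma is simply recalled as Lemma II.7 of \cite{Mane} without proof, the point being that it concerns uniformly contracting (not vol-hyperbolic) families, so no volume-preserving adaptation is needed. Your sketch of Ma\~n\'e's mechanism and of how the lemma feeds into Proposition \ref{l.linear} is consistent with, and in fact more detailed than, the paper's own treatment.
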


With this we can prove the following lemma

\begin{lemma}
If $\{\xi^{\alpha}\}$ is a uniformly vol-hyperbolic family then there exist $\eps>0$, $K>0$, $0<\lambda<1$ such that if $\{\eta^{\alpha}\}$ is an equivalent vol-family with $d(\xi,\eta)<\eps$ then $\eta$ is vol-hyperbolic. Moreover, if $n$ is the period of $\eta^{\alpha}$ and $\wt{\eta}_j^{\alpha}:\R^n/E^s_j\to\R^n/E^s_{j+1}$ is the map induced by $\eta_j^{\alpha}$ then
$$\|\prod_{j=0}^{n-1}\eta_j^{\alpha}/E^s_0\|\leq K\lambda^n\textrm{ and }\|(\prod_{j=0}^{n-1}\ov{\eta}_j^{\alpha})^{-1}\|\leq K\lambda^n.$$
\end{lemma}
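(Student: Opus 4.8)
The first assertion, that $\eta$ is vol-hyperbolic, is nothing but the definition of uniform vol-hyperbolicity, so we take for $\eps$ the constant from that definition (to be shrunk finitely many times below) and concentrate on the exponential estimates. The strategy is to reduce to the contracting case by passing to the stable subbundle and to the quotient by it, apply the previous lemma there, and finally turn the ``block'' estimates $\lambda_0^{[n/m]}$ into genuine per-step estimates $\lambda^n$. First I would record the standard uniform consequences of uniform vol-hyperbolicity, following \cite{Mane}: the bound $B:=\sup_{\alpha,j}\|\xi^\alpha_j\|<\infty$ (part of the definition) together with $\|(\xi^\alpha_j)^{-1}\|\le B^{d-1}$ (as $\det=1$); a uniform lower bound for the angle between the stable space $E^s_j$ and the unstable space $E^u_j$ of each $\xi^\alpha$, and of each equivalent vol-family $\eta$ with $d(\xi,\eta)<\eps$; and the resulting uniform continuity of the splitting, so that the spaces $E^s_{\eta,j},E^u_{\eta,j}$ are as close as we wish to $E^s_{\xi,j},E^u_{\xi,j}$ once $\eps$ is small. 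Observe also that for a nontrivial vol-hyperbolic sequence neither $E^s_j$ nor $E^u_j$ can be $\{0\}$, since a contracting (or expanding) isomorphism cannot have $|\det|=1$; so the determinant corrections used below always have room.

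\textbf{Key claim.} $\{\xi^\alpha|_{E^s}\}$ is a uniformly contracting family (in the $GL$-sense of the previous lemma), and likewise $\{(\xi^\alpha)^{-1}|_{E^u}\}$. I would prove this by lifting a perturbation of the restricted system to a volume-preserving perturbation of the full one. Given an equivalent sequence $\zeta^\alpha$ on $E^s$ with $d(\xi|_{E^s},\zeta)<\eps'$, let $\tilde\xi^\alpha_j$ act as $\zeta^\alpha_j$ on $E^s_j$ and as $c_j\,\xi^\alpha_j|_{E^u_j}$ on $E^u_j$, relative to the $\xi$-invariant splitting, where $c_j=\bigl(\det(\xi^\alpha_j|_{E^s_j})/\det\zeta^\alpha_j\bigr)^{1/\dim E^u_j}$ so that $\tilde\xi^\alpha_j\in SL(d)$. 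A short computation with the bounds above (in particular $\det(\xi^\alpha_j|_{E^s_j})$ is bounded away from $0$, whence $c_j=1+O(\eps')$) gives $\|\tilde\xi^\alpha_j-\xi^\alpha_j\|=O(\eps')$ uniformly, so for $\eps'$ small we have $d(\xi,\tilde\xi)<\eps$ and $\{\tilde\xi^\alpha\}$ is vol-hyperbolic. Now join $\tilde\xi^\alpha$ to $\xi^\alpha$ by a path of equivalent vol-families $\eps$-close to $\xi$ (possible since small balls in $SL(d)$ are connected): every period map along the path is hyperbolic, so the number of its eigenvalues inside the open unit disc is constant, equal to $\dim E^s_0$; since at the endpoint this period map is block-diagonal, with blocks $\prod_j\zeta^\alpha_j$ on $E^s_0$ and $(\prod_j c_j)\prod_j\xi^\alpha_j|_{E^u_0}$ on $E^u_0$, the count forces all $\dim E^s_0$ eigenvalues of $\prod_j\zeta^\alpha_j$ into the disc, i.e.\ $\zeta^\alpha$ is contracting. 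This is the claimed uniform contraction of $\{\xi^\alpha|_{E^s}\}$; applying the same argument to the inverse family $\{(\xi^\alpha)^{-1}\}$ --- which is again uniformly vol-hyperbolic, with stable bundle $E^u$ --- gives it for $\{(\xi^\alpha)^{-1}|_{E^u}\}$.

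With the claim in hand, apply the previous lemma to the uniformly contracting family $\{\xi^\alpha|_{E^s}\}$, obtaining $K_0>0$, $m\in\N$ and $0<\lambda_0<1$. For any equivalent $\eta$ with $d(\xi,\eta)<\eps$, identify $E^s_{\eta,j}$ with $E^s_{\xi,j}$ by near-identity isomorphisms; then $\{\eta^\alpha|_{E^s}\}$ is an equivalent contracting family within $\eps'$ of $\{\xi^\alpha|_{E^s}\}$ (after shrinking $\eps$), so the lemma yields, with $n$ the period of $\eta^\alpha$ and $k=[n/m]$,
$$\prod_{l=0}^{k-1}\Bigl\|\prod_{i=0}^{m-1}(\eta^\alpha|_{E^s})_{ml+i}\Bigr\|\le K_0\lambda_0^{\,k}.$$
Writing $n=mk+r$ with $0\le r<m$ and using submultiplicativity of the operator norm together with $\|\eta^\alpha_j\|\le B$, this gives $\|\prod_{j=0}^{n-1}\eta^\alpha_j|_{E^s_0}\|\le B^m K_0\lambda_0^{-1}(\lambda_0^{1/m})^{n}$; set $\lambda:=\lambda_0^{1/m}$ and $K:=\max\{B^mK_0\lambda_0^{-1},(B/\lambda)^m\}$, the last term absorbing the periods $n<m$. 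The same reasoning applied to $\{(\xi^\alpha)^{-1}|_{E^u}\}$ gives $\|(\prod_{j=0}^{n-1}\eta^\alpha_j|_{E^u_0})^{-1}\|\le K\lambda^n$; since the natural maps $E^u_j\to\R^d/E^s_j$ and their inverses are uniformly bounded (see above), conjugating by them converts this into $\|(\prod_{j=0}^{n-1}\bar\eta^\alpha_j)^{-1}\|\le K\lambda^n$ after enlarging $K$ by a uniform factor. Taking common constants for the two inequalities finishes the proof.

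\textbf{Main obstacle.} The delicate step is the key claim: the volume-preserving constraint prevents us from perturbing the restricted cocycle $\xi|_{E^s}$ freely, so the definition of ``uniformly contracting'' is not directly applicable. The way around is to push the determinant defect onto the complementary bundle, where step by step it is a near-identity scaling --- even though its product over a period is not controlled --- and then to read off contractivity not from smallness of the perturbation but from the invariance of the index of the period map along a path inside $SL(d)$. Establishing the uniform hyperbolicity constants (angle, norm bounds) underlying the argument is routine but, as in \cite{Mane}, needs some care.
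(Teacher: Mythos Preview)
Your strategy matches the paper's: reduce to the uniformly contracting case (the previous lemma) by showing that the restricted family $\{\xi^\alpha|_{E^s}\}$ and the inverse quotient family are uniformly contracting, and do this by lifting perturbations of the restricted cocycle to volume-preserving perturbations of the full one via a determinant correction on the complementary bundle. This determinant-compensation trick is exactly what the paper does.

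The difference is in how the key claim is closed, and here your argument has a gap. The paper argues by contradiction: if the restricted (or quotient) family were not uniformly contracting one would find a sequence $\psi$ close to it whose period product has an eigenvalue of modulus one; the block-diagonal lift then carries that same eigenvalue in its period map, hence is not hyperbolic---contradicting uniform vol-hyperbolicity. Your direct path/index argument instead tries to read off contractivity of $\prod_j\zeta^\alpha_j$ from the total index of the period map of $\tilde\xi$. But knowing that this block-diagonal period map has exactly $\dim E^s_0$ eigenvalues inside the disc does \emph{not} force all eigenvalues of the $E^s$-block inside: some could lie outside while the $E^u$-block $(\prod_j c_j)\prod_j\xi^\alpha_j|_{E^u_0}$ contributes the missing inside-eigenvalues---and since, as you yourself note, $\prod_j c_j$ is uncontrolled, this cannot be ruled out by size alone. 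The fix is to take the path itself block-diagonal (interpolate $\zeta^{(t)}$ from $\xi|_{E^s}$ to $\zeta$ inside a small ball of $GL(E^s)$ and correct determinants on $E^u$ at each $t$): then every period map along the path is simultaneously block-diagonal and hyperbolic, so neither block ever has an eigenvalue on the unit circle and the inside-count of each block is \emph{separately} constant in $t$. Alternatively, switch to the paper's contradiction, which is shorter.

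A minor further point: you invoke the uniform angle bound between $E^s$ and $E^u$ at the outset, but in the paper's (and Ma\~n\'e's) ordering the angle lemma comes only after the present one; the paper sidesteps the need for it here by formulating the lift via the quotient $\R^d/E^s$ rather than via $E^u$.
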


\begin{proof}
As Ma\~n\'e did, if $\eps>0$ is small and $0<m<d$ and we take a vol-family $\{\phi^{\beta}\}$ containing every sequence $\phi:\Z\to GL(d-m)$ with same period of some $\xi^{\alpha}$ such that $\sup_i\|\phi-(\ov{\xi}^{\alpha}_i)\|<\eps$ and this family is uniformly contracting. Indeed, if not there exist a sequence $\psi:\Z\to GL(d-n)$ such that for some $\beta$, $\psi$ and $\phi^{\beta}$ have the same period, $\sup_i\|\phi_i^{\beta}-\psi\|$ is small and $\prod_{j=0}^{n-1}\psi_j$ has an eigenvalue with modulus 1 and determinant close to 1 (since is close to $\phi$).

Then construct a sequence $\zeta$ with the same period of $\xi$ such that $\zeta_j=\psi_j^{-1}$ restricted to $\R^d/E^s_j$ and equal to $\det(\psi_j)\xi$ restricted to $E^s_j$. Note that $\zeta$ is close to $\xi$.  This would contradict the vol-uniform hyperbolicity of $\{\xi^{\alpha}\}$.

Now, we can proceed exactly as in the rest of the proof of lemma II.8 of \cite{Mane}.
\end{proof}

Thus, as in \cite{Mane}. We obtain (1) and (3) of proposition \ref{l.linear}.

Now, we prove (2) of proposition \ref{l.linear}. First, we observe that the next lemma about angles (lemma II.9 of \cite{Mane}) holds in the volume preserving case.

\begin{lemma}
If $\{\xi^{\alpha}\}$ is an uniformly vol-family then there exist $\eps>0$, $\gamma>0$ and $m\in \N$ such that if $\{\eta^{\alpha}\}$ is an equivalent vol-family with $d(\xi,\eta)<\eps$ then for every $\eta^{\alpha}$ with period $n\geq m$ the angles between stable and unstable spaces are bounded away from zero, i.e.
$$\angle (E^s_0(\eta^{\alpha}),E_0^u(\eta^{\alpha}))>\gamma.$$
\end{lemma}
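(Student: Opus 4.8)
The statement to prove is the angle lemma (Lemma II.9 of Mañé, in the volume preserving context): if $\{\xi^{\alpha}\}$ is a uniform vol-hyperbolic family, there exist $\eps>0$, $\gamma>0$ and $m\in\N$ such that every equivalent vol-family $\{\eta^{\alpha}\}$ with $d(\xi,\eta)<\eps$ has angles between stable and unstable spaces bounded below by $\gamma$ along sequences of period $n\geq m$.

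The plan is to follow Mañé's argument essentially verbatim, and the key point is that no new idea is needed: the volume preserving restriction is harmless because we have already established in the preceding lemma the contraction/expansion estimates $\|\prod_{j=0}^{n-1}\eta_j^{\alpha}|_{E^s_0}\|\leq K\lambda^n$ and $\|(\prod_{j=0}^{n-1}\ov{\eta}_j^{\alpha})^{-1}\|\leq K\lambda^n$ for equivalent vol-families with $d(\xi,\eta)<\eps$. First I would fix the constants $K>0$, $0<\lambda<1$, $\eps>0$ coming from that lemma, and choose $m$ large enough that $K\lambda^m<\tfrac12$. The strategy is by contradiction: suppose no such $\gamma$ exists. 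Then there is a sequence of equivalent vol-families $\eta^{(k)}$ with $d(\xi,\eta^{(k)})<\eps$, indices $\alpha_k$ and periods $n_k\geq m$, for which $\angle(E^s_0(\eta^{(k),\alpha_k}),E^u_0(\eta^{(k),\alpha_k}))\to 0$.

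Next I would run Mañé's perturbation construction: using the fact that $E^s_0$ is contracted and $E^u_0$ is expanded at a definite exponential rate, one shows that if the angle at time $0$ is too small then one can perturb the linear maps (within the vol-hyperbolic family, keeping determinants equal to one — this is where the volume preserving bookkeeping enters, but it is the same bookkeeping already used in the previous lemma, namely renormalizing by $\det$ on the stable block) to produce an equivalent vol-family $\tilde\eta$, still within $\eps$ of $\xi$, but whose product around the period has an eigenvalue of modulus $1$; equivalently one produces a sequence that is not vol-hyperbolic, contradicting the uniform vol-hyperbolicity of $\{\xi^{\alpha}\}$. Concretely, a small rotation mixing the stable and unstable directions, of size controlled by the angle, is enough to collapse the hyperbolic splitting once the contraction and expansion rates are fixed; the determinant is kept at $1$ by composing with a compensating scaling on the (already contracting) complementary block exactly as in the proof of the previous lemma.

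The main obstacle — and it is a mild one — is checking that the perturbation used to destroy hyperbolicity can be arranged inside $SL(d)$ rather than merely in $GL(d)$, uniformly in $\alpha$ and in the period $n$. This is precisely the issue that was handled in the proof of the preceding lemma (the passage to the family $\{\phi^{\beta}\}$ of sequences into $GL(d-m)$, close to the quotient cocycles $\ov{\xi}^{\alpha}$, together with the renormalization $\zeta_j=\det(\psi_j)\xi$ on $E^s_j$ and $\psi_j^{-1}$ on $\R^d/E^s_j$), so here it suffices to remark that the same device applies: one perturbs in $GL(d)$ to kill the angle, then rescales the stable block by the reciprocal of the determinant to land back in $SL(d)$, the rescaling being harmless since $E^s$ is strongly contracted. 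Once this is observed, the remainder of Mañé's argument on pages 523--540 transfers word for word, and combining (1), (2), (3) of Proposition \ref{l.linear} with Lemma \ref{l.franks} yields Proposition \ref{p.domina}.
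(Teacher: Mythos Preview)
Your overall strategy is correct and matches the paper's: follow Ma\~n\'e's contradiction argument verbatim and check that the perturbation destroying hyperbolicity can be taken in $SL(d)$. However, you are working harder than necessary. Ma\~n\'e's actual perturbation at the one step where it differs from $\eta$ is a \emph{shear},
\[
\zeta_0=\eta_0\begin{pmatrix} I & C\\ 0 & I\end{pmatrix},
\]
written in the block decomposition $E^s_0\oplus E^u_0$, with $C$ chosen suitably small; this matrix already has determinant $1$, so the perturbed sequence is automatically a vol-sequence and no compensating rescaling on the stable block is needed. The paper's proof is literally this one observation. Your proposed ``small rotation plus determinant correction on $E^s$'' would also work, but it imports the renormalization trick from the previous lemma where it is not required here: the shear is the cleaner device precisely because it lies in $SL(d)$ from the start and sends $E^u_0$ close to $E^s_0$ when the angle is small.
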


Indeed, to obtain a contradiction, Ma\~n\'e construct the following perturbation $\zeta_j=\eta_j$ if $0<j\leq n-1$ and
$$\xi_0=\eta_0\left(
\begin{array}{cc}
I & C \\
0 & I\\\end{array}\right).$$
For some suitable $C$, note that this is also a vol-sequence.

Finally, if (2) does not holds then Ma\~n\'e shows that the following sequence has small angles between the stable and unstable spaces. The sequence has the form $\eta^t:\Z\to GL(d)$, for some $t$ and has period $n$ large, such that for $1\leq j<n-1$,
$$\eta^t_i=(I+T_i)\xi_i^{\alpha}(I+P_t) \textrm{ , }\eta^t_{i+j}=(I+T_{i+j})\xi_{i+j}^{\alpha}\textrm{ and }$$
$$\eta^t_{i+n-1}=(I+S_t)T_{i+n_0-1}\xi^{\alpha}_{i+n-1}.$$
However, the transformations involved satisfy the following estimate for $\eps>0$ very small,
$$\|P_t\|\leq \eps\,,\,\|S_t\|\leq \eps \textrm{ and }\|T_j\|\leq \eps.$$
Hence $\det(I+T_j)$, $\det(I+S_t)$ and $\det (I+P_t)$ are close to 1, so close as we want. Hence we can divide $\eta$ by these determinants appropriately and now we obtain a vol-sequence, with same invariant spaces. In particular the angles are small and again we obtain a contradiction with the previous lemma.

\begin{remark}
Actually, the same argument could be used to show that if a vol-family is not uniformly hyperbolic then it is not uniformly vol-hyperbolic. Indeed, it would have a not hyperbolic sequence sufficiently close. In particular, the determinant would be almost one. Thus multiplying by the inverse of the determinant in a direction distinct of the non-hyperbolic direction, we obtain a not hyperbolic vol-sequence close to the original. A contradiction.
\end{remark}

We finish this appendix with some comments about proposition \ref{p.domina} in the symplectic case. Of course if $f$ is Anosov the statements of that proposition are obviously true. In fact, such proposition is true by the arguments of Newhouse \cite{Newhouse}, since he proves that if a symplectic diffeomorphism has an homoclinic tangency than it can be approximated by one with an 1-elliptic periodic point, which is in particular a non-hyperbolic point.

However, proposition \ref{p.domina} could be obtained directly. For instance, the domination part could be obtained using arguments from \cite{HT} and \cite{Bochi}. In fact, Avila-Bochi-Wilkinson in \cite{ABW} theorem 3.5, obtains a direct proof that a partially hyperbolic non-Anosov diffeomorphism can be approximated by one with a non-hyperbolic periodic point under the hypothesis of \emph{unbreakability}. Since there are many references and ideas about this subject we will not elaborate more on it and we refer the reader to those references for more details.

\bibliographystyle{amsplain}

\end{document}